\documentclass[sn-mathphys,Numbered]{sn-jnl}

\usepackage{graphicx}%
\usepackage{multirow}%
\usepackage{amsmath,amssymb,amsfonts}%
\usepackage{amsthm}%
\usepackage{mathrsfs}%
\usepackage[title]{appendix}%
\usepackage{xcolor}%
\usepackage{textcomp}%
\usepackage{manyfoot}%
\usepackage{booktabs}%
\usepackage{algorithm}%
\usepackage{algorithmicx}%
\usepackage{algpseudocode}%
\usepackage{listings}%

\theoremstyle{thmstyleone}%
\newtheorem{theorem}{Theorem}
\newtheorem{proposition}[theorem]{Proposition}%

\theoremstyle{thmstyletwo}%
\newtheorem{example}{Example}%
\newtheorem{remark}{Remark}%

\theoremstyle{thmstylethree}%
\newtheorem{definition}{Definition}%

\theoremstyle{thmstyletwo}
\newtheorem{lemma}{Lemma}

\theoremstyle{thmstyletwo}
\newtheorem{corollary}{Corollary}
\def\C{\mathbb{C}}
\begin{document}

\title[Normal and unitary operators]{On properties of normal operators and self-adjoint operators on smooth Banach spaces}


\author*[1]{\fnm{Mohammed} \sur{Shameem}}\email{mohammed\_p230121ma@nitc.ac.in}
\author[2]{\fnm{Deepesh} \sur{K. P.}}\email{deepeshkp@nitc.ac.in}



\affil*[1,2]{\orgdiv{Department of Mathematics}, \orgname{National Institute of Technology Calicut}, \orgaddress{\street{NIT Campus}, \city{Kozhikode}, \postcode{673601}, \state{Kerala}, \country{India}}}




\abstract{
This article introduces classes of normal and unitary operators on smooth Banach spaces, providing extensions of the classical notions of normal and unitary operators from Hilbert spaces to the smooth Banach space setting. The proposed class of normal operators contains, in particular, the class of self-adjoint operators on Banach spaces known in the literature.
In addition, we study several properties of self-adjoint operators on smooth Banach spaces, with emphasis on the norm, minimum modulus, numerical radius, and Crawford number, as well as the corresponding attainment properties and the relations among these quantities.
Further, we obtain characterisations and spectral properties of the newly introduced classes of normal and unitary operators. Our results demonstrate close analogies with the corresponding theory of self-adjoint, normal, and unitary operators on Hilbert spaces.}

\keywords{smooth Banach space, self-adjoint, normal operator on Banach space, minimum modulus, numerical radius, Crawford number}


\pacs[MSC Classification]{Primary 47B01; Secondary 46B07, 47A65}

\maketitle

\section{Introduction}\label{intro}
The study of operators on Hilbert spaces, especially their spectral properties, norm attainment, and minimum modulus attainment, has a long and rich history and remains a vibrant area of research (\cite{carvajal2012operators,carvajal2014operators,udayminimum2,chakraborty2020some,ganesh2015,venku2019absolutely}). In recent decades, considerable attention has also been given to the numerical range of operators, and related quantities such as the numerical radius and Crawford number~(\cite{bhuniabook,udayminimum2,choi2023crawford,crawford2024,crawfordattainset,crawfordattainsetc}). Among these operators, self-adjoint operators play a central role, with their spectral and numerical range properties having been extensively studied~(\cite{carvajal2012operators,carvajal2014operators,ganesh2015,limaye1996functional}).

Motivated by these developments, several authors have attempted to extend the notion of self-adjointness beyond the Hilbert spaces, particularly to the setting of Banach spaces (\cite{shtraus1978,wojcik2013}). In this direction, García-Pacheco (2020) introduced a notion of self-adjoint operators on smooth Banach spaces in \cite{banachSA}, using the duality mapping. The concept of numerical range has also been extended to Banach spaces using continuous linear functionals \cite{duncan1971}. Despite the availability of many analogues of Hilbert space results in this setting, several classical features—such as the convexity of the numerical range—need not hold in Banach spaces.

In the Hilbert space setting, characterisations were obtained for norm-attaining and minimum modulus attaining self-adjoint operators using eigenvalues~\cite{carvajal2012operators,carvajal2014operators,ganesh2015}. Here we consider the attainment of such quantities for operators on smooth Banach spaces. We also study the norm attainment of self-adjoint operators on smooth Banach spaces through the attainment of numerical radius, since these two quantities coincide for self-adjoint operators. We show that the Crawford number coincides with the minimum modulus for strongly normal operators, extending a known result on positive operators in the Hilbert space setting to smooth Banach spaces. Further, for a special class of operators, we characterise the attainment of the Crawford number in terms of eigenvalues, which also provides a characterisation of minimum modulus attainment in the setting of smooth Banach spaces. Our result in Theorem~\ref{mu_power} on the minimum modulus of powers of operators extends a known result in two directions: from Hilbert spaces to smooth Banach spaces, and from positive operators to the analogue of self-adjoint operators. Further, it can be viewed as a minimum modulus analogue of the known norm version proved in~\cite{banachSA}. Using this result, we obtain several results concerning the Crawford number of powers of operators in different cases. 

We introduce a new notion of normal operators in smooth Banach spaces. This class contains all self-adjoint operators on smooth Banach spaces as well as all normal operators on Hilbert spaces. We also introduce a notion of unitary operators in this framework, extending the classical concept. Several characterizations of these classes of operators are established.

We analyse several spectral properties of the self-adjoint and normal operators defined on smooth Banach spaces. In particular, in analogy with the Hilbert space case, we establish a relation between eigenvectors corresponding to distinct eigenvalues of a self-adjoint operator and show that the eigenspectrum is countable. We also present a few results concerning the spectrum of normal operators on smooth Banach spaces. 

The article is organized as follows. In the next section, we introduce the basic notation and preliminary results required for the development of the paper. Section 3 is devoted to the study of attainment and various properties of quantities associated with self-adjoint operators on smooth Banach spaces; we also present several results concerning powers of operators in this section. In Section 4, we introduce and investigate properties of normal and unitary operators in the setting of smooth Banach spaces. Section 5 focuses on the spectral properties of self-adjoint and normal operators in this setting. The paper concludes with a discussion of some open problems related to this work.

\section{Notations and Preliminaries}\label{prelim}
In this article, Banach spaces over the complex field $\mathbb{C}$ are denoted by the symbols $X, Y$, and the set of all bounded linear operators from $X$ to $Y$ is denoted by $B(X,Y)$. When $X=Y$, we use the abbreviation $B(X)$ for $B(X,X)$, and the notation $X^*$ stands for $B(X,\mathbb{C})$, the space of all bounded linear functionals. The unit sphere $\{ x \in X : \|x\| = 1 \}$ of a Banach space $X$ is denoted as $S_X$, and we use the notation $H$ for a complex Hilbert space. By an operator, we mean a bounded linear operator throughout this article, and we denote by $\mathbb{N}$ the set of all natural numbers.\vskip0.1cm

For an operator $T$, the spectrum $\sigma(T)$ is the set of all $\lambda \in \mathbb{C}$ for which $T - \lambda I$ does not have a bounded inverse. Some important subsets of $\sigma(T)$ include the eigenspectrum $\sigma_{eig}(T)$, consisting of all those $\lambda$ such that $T - \lambda I$ is not injective, and the approximate spectrum $\sigma_{app}(T)$, consisting of all those $\lambda$ such that $T - \lambda I$ is not bounded below. The spectral radius of $T$, denoted by $\rho(T)$, is defined as
$$
\rho(T) = \sup \{ |\lambda| : \lambda \in \sigma(T) \}.
$$ Two important quantities associated with an operator $T\in B(X)$ are the operator norm $\|T\|$ defined by
$$\|T\|=\sup\left\{\|Tx\|:\, x\in S_X\right\},$$
and the minimum modulus $\mu(T)$ defined by
$$
\mu(T) = \inf \{ \|Tx\| : x \in S_X \}.
$$
An operator $T$ is said to attain its minimum modulus (or the norm) if there exists an $x_0 \in S_X$ such that $\mu(T) = \|Tx_0\|$ (or $\|T\|=\|Tx_1\|, \mbox{ for some } x_1\in S_X$, in the case of the norm). For some studies on the attainment of the minimum modulus and the norm, one may refer \cite{carvajal2012operators,carvajal2014operators, crawford2024,venku2019absolutely}.
\vskip0.1cm
Another important set, closely connected to the spectrum of an operator, is the numerical range $W(T)$ of $T \in B(X)$, which is given by
$$
W(T) = \{ x^*(T(x)) : x \in S_X, \, x^* \in S_{X^*},\, x^*(x) = 1 \}.
$$
Some quantities associated with the numerical range of an operator $T$ are the numerical radius
$$
r(T) = \sup \{ |\lambda| : \lambda\in W(T) \}\,,
$$
and the Crawford number
$$
c(T) = \inf \{ |\lambda| : \lambda\in W(T) \},
$$
which form the upper and lower bounds (in  modulus) of the numerical range.
 An operator $T$ is said to attain its Crawford number (or the numerical radius) if there exists an $x_0 \in S_X$ such that $c(T) = | x_0^*(T(x_0))|$ (or $r(T) = |x_1^*(T(x_1))|$ for some $x_1\in S_X$, in the case of the numerical radius). One may refer to \cite{bhuniabook,udayminimum2,choi2023crawford, crawford2024,crawfordattainset,crawfordattainsetc} for studies on these numbers. An operator is called a normaloid operator if its numerical radius coincides with the operator norm.\vskip0.1cm

The transpose of an operator $T \in B(X,Y)$ is defined as the linear operator $T^{\prime}$ from $Y^*$ to $X^*$ defined by 
$$
(T^{\prime}(y))(x) = y(Tx), \quad y\in Y^*,\, x \in X,
$$
and for an operator $T \in B(H_1,H_2)$, where $H_1,H_2$ are Hilbert spaces, the adjoint is defined as the operator $T^*$ from $H_2$ to $H_1$ defined by
$$
\langle Tx, y \rangle = \langle x, T^*y \rangle,\, x \in H_1,\, y\in H_2.
$$

A Banach space $X$ is said to be smooth if every $x \in S_X$ admits a unique 
$x^* \in S_{X^*}$ such that $x^*(x)=1$ (\cite{dragomir2003}). Hence, for a smooth Banach space $X$, we can define a function $J : X \to X^*$, called the dual map or duality mapping given by $J(x)=x^*$. Thus the duality mapping $J$ satisfies 
\[\|J(x)\|=\|x\| \mbox{ and } J(x)(x) = \|x\|^2 \mbox{ for every } x\in X.\]
It is well known (\cite{dragomir2003}) that, in a smooth Banach space $X$, $J$ is surjective if and only if $X$ is reflexive and $J$ is injective if and only if $X$ is strictly convex. In a smooth Banach space $X$, for $x \in X$, we denote
\[
^\perp J(x) = \{y \in X : J(x)(y) = 0\}.
\]

In the setting of smooth Banach spaces, in \cite{banachSA}, the author introduced certain notions of self-adjoint, Hermitian, and related classes of operators, closely analogous to the corresponding concepts in the Hilbert space setting.
\begin{definition}(\cite{banachSA})
    Let $X$ be a smooth Banach space. An operator $T \in B(X)$ is said to be:
\begin{itemize}
    \item \textit{self-adjoint} if $T' \circ J = J \circ T$;
    \item \textit{Hermitian} if $J(x)(T(x)) \in \mathbb{R}$ for all $x \in X$;
    \item \textit{positive} if $J(x)(T(x)) \ge 0$ for all $x \in X$;
    \item \textit{strongly normal} if $T = S^2$ for some self-adjoint $S \in {B}(X)$.
\end{itemize}
\end{definition}
\noindent It is easy to observe that the concepts of self-adjoint and Hermitian operators (similarly, positive and strongly normal operators) coincide with the classical self-adjoint operator (positive operator, respectively) when $X$ is a Hilbert space over $\mathbb{C}$. Several properties of these operators were studied and many results on them were established in the article \cite{banachSA}.

\section{Attainment properties of self-adjoint operators}\label{self_adjoint}

In this section, we consider self-adjoint operators on smooth Banach spaces in the sense of \cite{banachSA}, and discuss about their norm, numerical radius, minimum modulus, Crawford numbers, their properties, inter relations and attainment. Note that, here onwards we use the notation $X$ to represent a smooth Banach space over $\mathbb{C}$.

Observe that for any operator $T$ on a Banach space, if $c(T)$ (or $r(T)$) is equal to the modulus of an eigenvalue $\lambda$ of $T$, then $T$ attains the Crawford number (numerical radius, respectively). This is because 
$$c(T)=|\lambda|=|\lambda| J(x_0)(x_0)=| J(x_0)(\lambda x_0)|=| J(x_0)(T x_0)|,$$
where $x_0$ is a unit eigenvector corresponding to the eigenvalue $\lambda$ of $T$.
 In the Hilbert space setting, it is known (Proposition 2.3, \cite{carvajal2012operators}) that a self-adjoint operator $T$ is norm attaining if and only if either $\|T\|$ or $-\|T\|$ is an eigenvalue of $T$. In the following proposition, this result is extended to the setting of smooth Banach spaces. It is worth observing that on a Hilbert space, for any self-adjoint operator $T$, the operator $\|T\|^{2}I - T^{2}$ is always strongly normal.

\begin{proposition}\label{SA_normattain}
Let $T$ be a self-adjoint operator on $X$ such that
$\|T\|^2 I - T^2$ is strongly normal. Then $T$ is norm attaining if and only if either $\|T\|$ or $-\|T\|$ is an eigenvalue of $T$.
\end{proposition}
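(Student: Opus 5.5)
The converse direction is immediate: if $T x_0=\pm\|T\|x_0$ for a unit vector $x_0$, then $\|Tx_0\|=\|T\|$, so $T$ is norm attaining. Everything therefore rests on the forward direction, which I would prove by imitating the Hilbert space argument. There one writes $\|T\|^2-\|Tx_0\|^2=\langle(\|T\|^2I-T^2)x_0,x_0\rangle=0$ and uses positivity of $\|T\|^2I-T^2$ to conclude that $(\|T\|^2I-T^2)x_0=0$. The factorization
\[
\|T\|^2I-T^2=(\|T\|I-T)(\|T\|I+T)
\]
then yields the eigenvalue.

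\textbf{Step 1.} Let $x_0\in S_X$ with $\|Tx_0\|=\|T\|$, and set $A=\|T\|^2I-T^2$. By hypothesis $A=S^2$ for some self-adjoint $S$. I will show that $J(x_0)(Ax_0)=0$. Self-adjointness of $T$ gives $T'J=JT$, so
\[
J(x_0)(T^2x_0)=(T'J(x_0))(Tx_0)=J(Tx_0)(Tx_0)=\|Tx_0\|^2=\|T\|^2 .
\]
Hence $J(x_0)(Ax_0)=\|T\|^2-\|T\|^2=0$.

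\textbf{Step 2.} Deduce $Ax_0=0$. By the same computation applied to $S$,
\[
J(x_0)(S^2x_0)=(S'J(x_0))(Sx_0)=J(Sx_0)(Sx_0)=\|Sx_0\|^2 .
\]
Step 1 therefore gives $\|Sx_0\|^2=0$, so $Sx_0=0$, and hence $Ax_0=S^2x_0=0$. This is exactly where strong normality replaces Hilbert-space positivity: a square of a self-adjoint operator gives the identity $J(x)(S^2x)=\|Sx\|^2$, which forces the vanishing. I expect this to be the main conceptual point of the proof, but it is routine once the computation is written out.

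\textbf{Step 3.} Extract the eigenvalue from $(\|T\|I-T)(\|T\|I+T)x_0=0$. If $\|T\|=0$, then $T=0$ and $0=\pm\|T\|$ is an eigenvalue, so assume $\|T\|>0$ and let $y=(\|T\|I+T)x_0$.
\begin{itemize}
\item If $y=0$, then $Tx_0=-\|T\|x_0$ with $x_0\neq0$, so $-\|T\|$ is an eigenvalue.
\item If $y\neq0$, then $(\|T\|I-T)y=0$, so $Ty=\|T\|y$ and $\|T\|$ is an eigenvalue.
\end{itemize}
No obstacle is expected here; the only care needed is the trivial case $T=0$ handled above.
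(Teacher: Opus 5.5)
Your proof is correct and follows essentially the same route as the paper: you use $T'J=JT$ to get $J(x_0)\bigl((\|T\|^2I-T^2)x_0\bigr)=0$, use the identity $J(x_0)(S^2x_0)=\|Sx_0\|^2$ to conclude $S^2x_0=0$, and then apply the factorization $(\|T\|I-T)(\|T\|I+T)$ with the same case split on $(\|T\|I+T)x_0$. Your separate treatment of $T=0$ is harmless but unnecessary, since in that case $y=Tx_0=0$ and the first case already applies.
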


\begin{proof}
Assume that $T$ is norm attaining. Then $
\|Ta\| = \|T\|$ for some $ a \in S_X.$ Hence, \(J(Ta)(Ta) = \|T\|^2 J(a)(a)\). Then
\[
\begin{aligned}
T' J(a)(Ta) = \|T\|^2 J(a)(a)\, \Rightarrow \, J(a)(T^2 a) = \|T\|^2 J(a)(a),
\end{aligned}
\]
which implies that \(J(a)(\|T\|^2 I - T^2)(a) = 0.\)

Let $
S^2 := \|T\|^2 I - T^2.$ Now,
\[J(a)(S^2 a)=S' J(a)(S a)=J(Sa)(Sa)=\|Sa\|^2.\]
Hence $S^2a=0$, implying $T^2a=\|T\|^2a$. But then
\[(T-\|T\|I)(T+\|T\|I)a = 0.\]
If $(T+\|T\|I)a = 0,$ then $-\|T\|$ is an eigenvalue of $T$; else  $\|T\|$ is an eigenvalue of $T$ with eigenvector $w=(T+\|T\|I)a$. The converse part is obvious.
\end{proof}

A similar characterization for minimum modulus attaining self-adjoint operators is given in the following proposition, whose proof is analogous to the proof of Proposition \ref{SA_normattain} (by taking $S^2 =T^2-\mu(T)^2I$). This generalizes the result given in Proposition 3.1 of \cite{ganesh2015} to smooth Banach spaces.
\begin{proposition}\label{minimum attain eigenvalue}
Let $T$ be a self-adjoint operator on $X$ such that
$ T^2-\mu(T)^2I$ is strongly normal. Then $T$ attains its minimum if and only if $\mu(T)$ or $-\mu(T)$ is an eigenvalue of $T$.
\end{proposition}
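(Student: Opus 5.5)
We mirror the proof of Proposition~\ref{SA_normattain}, with the roles of $\|T\|^2$ and $\mu(T)^2$ interchanged. The converse direction is immediate. If $T x_0=\pm\mu(T)x_0$ for some $x_0\in S_X$, then $\|Tx_0\|=\mu(T)$, so the minimum modulus is attained at $x_0$.

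For the forward direction, assume $\|Ta\|=\mu(T)$ for some $a\in S_X$. We first convert this into a statement about $J(a)$. We have
\[J(Ta)(Ta)=\|Ta\|^2=\mu(T)^2=\mu(T)^2J(a)(a).\]
Self-adjointness gives $J(Ta)=T'J(a)$, so the left side equals $J(a)(T^2a)$. Hence
\[J(a)\bigl((T^2-\mu(T)^2I)a\bigr)=0.\]

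Next we use strong normality. Write $T^2-\mu(T)^2I=S^2$ with $S$ self-adjoint. Then
\[J(a)(S^2a)=(S'J(a))(Sa)=J(Sa)(Sa)=\|Sa\|^2.\]
Combined with the previous display, this gives $Sa=0$, hence $S^2a=0$, that is, $T^2a=\mu(T)^2a$.

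Finally we factor $(T-\mu(T)I)(T+\mu(T)I)a=0$; the two factors commute. If $(T+\mu(T)I)a=0$, then $-\mu(T)$ is an eigenvalue with eigenvector $a$. Otherwise $w=(T+\mu(T)I)a\neq 0$ satisfies $Tw=\mu(T)w$, so $\mu(T)$ is an eigenvalue.

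There is no genuine obstacle here. The only delicate step is the identity $J(a)(S^2a)=\|Sa\|^2$, which relies on $S$ being self-adjoint; this is exactly what the strong normality hypothesis supplies. Note also that in the degenerate case $\mu(T)=0$ the argument still goes through, and it yields that $0$ is an eigenvalue.
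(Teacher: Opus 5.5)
Your proof is correct and is essentially the paper's approach: the paper proves this by saying it mirrors the proof of Proposition~\ref{SA_normattain} with $S^2=T^2-\mu(T)^2I$. You carry out exactly that substitution, using $T'J(a)=J(Ta)$ to get $J(a)(S^2a)=0$, then $J(a)(S^2a)=\|Sa\|^2$, then factoring $(T-\mu(T)I)(T+\mu(T)I)a=0$.
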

To prove a similar attainment result on the numerical radius, we first observe a result proved in \cite{banachSA} for self-adjoint operators.
\begin{theorem}(Corollary 3.2 , \cite{banachSA})\label{Pacheco-result}
    For a self-adjoint operator $T\in B(X)$, the numerical radius coincides with the spectral radius and the norm.
\end{theorem}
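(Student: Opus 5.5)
Since Theorem~\ref{Pacheco-result} is cited from \cite{banachSA}, I will re-derive it from the definition $T'\circ J = J\circ T$ and standard Banach algebra facts. The plan is to prove the chain
\[
\rho(T)\le r(T)\le \|T\| \le \rho(T).
\]
The first two inequalities hold for every operator on a Banach space. The inequality $r(T)\le\|T\|$ is immediate, since $|J(x)(Tx)|\le \|J(x)\|\|Tx\|\le\|T\|$ for $x\in S_X$. The inequality $\rho(T)\le r(T)$ is the classical fact that $\sigma(T)$ is contained in the closure of the (spatial) numerical range. I would prove it via the approximate spectrum. Any boundary point $\lambda$ of $\sigma(T)$ lies in $\sigma_{app}(T)$, and some such point satisfies $|\lambda|=\rho(T)$, since $\sigma(T)$ is compact. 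Choose unit vectors $x_n$ with $\|(T-\lambda I)x_n\|\to0$. Then
\[
|J(x_n)(Tx_n)-\lambda| = |J(x_n)((T-\lambda I)x_n)|\to 0,
\]
so $\lambda\in\overline{W(T)}$ and $|\lambda|\le r(T)$.

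The substantive step is $\|T\|\le\rho(T)$, and the self-adjointness enters only here. First I would show $\|T^2\|=\|T\|^2$. For $x\in X$,
\[
\|Tx\|^2 = J(Tx)(Tx) = (T'J(x))(Tx) = J(x)(T^2x)\le \|x\|\,\|T^2x\|\le \|T^2\|\,\|x\|^2,
\]
which gives $\|T\|^2\le\|T^2\|$; the reverse inequality is trivial. Powers of a self-adjoint operator are again self-adjoint, because
\[
(T^k)'J = (T')^kJ = (T')^{k-1}JT = \cdots = JT^k.
\]
So the same identity iterates, and $\|T^{2^m}\|=\|T\|^{2^m}$ for all $m$. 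The Gelfand spectral radius formula along the subsequence $2^m$ then yields
\[
\rho(T)=\lim_{m\to\infty}\|T^{2^m}\|^{1/2^m}=\|T\|,
\]
which closes the chain.

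The main obstacle I expect is the step $\rho(T)\le r(T)$ in the complex Banach (rather than Hilbert) setting. The numerical range here is not convex, so one cannot argue that the spectrum lies in the closed convex hull of $W(T)$. The approximate-spectrum argument above avoids convexity, but it requires care in justifying that a spectral value of maximal modulus lies on $\partial\sigma(T)$, hence in $\sigma_{app}(T)$. That in turn relies on the standard fact that boundary points of the spectrum are approximate eigenvalues. As a fallback, one could bypass this step altogether: self-adjointness gives $r(T)\ge\|T\|$ directly if the norm identity can be turned into a lower bound on $|J(x)(Tx)|$. The spectral route, however, seems cleaner.
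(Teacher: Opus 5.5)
Your proof is correct. The sandwich $\rho(T)\le r(T)\le\|T\|$ holds for every operator, and your approximate-spectrum argument for the first inequality is complete, so the step you flag as the main obstacle is not really one. Self-adjointness then enters only through $\|T\|^2=\|T^2\|$, iterated to $\|T^{2^m}\|=\|T\|^{2^m}$ and combined with Gelfand's formula. The paper does not prove this result itself but imports it from \cite{banachSA}, and it later notes that \cite{banachSA} establishes exactly $\|T^{2^n}\|=\|T\|^{2^n}$ for self-adjoint $T$, so your route uses the same key ingredient the paper attributes to that source.
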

In the case of normaloid operators, the attainment of the norm follows from the attainment of the numerical radius as seen below.
\begin{proposition}\label{SA_norm_numer}
    Let $T\in B(X)$ be such that $r(T)=\|T\|$. Then $T$ attains its norm if it attains the numerical radius. In particular, for self adjoint operators, the attainment of numerical radius implies the attainment of norm.
\end{proposition}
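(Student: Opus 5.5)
The argument is a direct chain of inequalities. Suppose $T$ attains its numerical radius, so there is $x_0\in S_X$ with $r(T)=|J(x_0)(Tx_0)|$. Here $J(x_0)$ is the unique norming functional of $x_0$, which exists because $X$ is smooth. Since $\|J(x_0)\|=\|x_0\|=1$, we have
\[
\|T\| = r(T) = |J(x_0)(Tx_0)| \le \|J(x_0)\|\,\|Tx_0\| = \|Tx_0\| \le \|T\|\,\|x_0\| = \|T\|.
\]
Every inequality in this chain is therefore an equality. In particular $\|Tx_0\|=\|T\|$ with $x_0\in S_X$, so $T$ attains its norm at the same vector $x_0$.

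For the second assertion, Theorem~\ref{Pacheco-result} says that a self-adjoint operator $T\in B(X)$ has $r(T)=\|T\|$. So a self-adjoint operator is normaloid, and the first part applies to it directly.

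I do not expect a real obstacle. The one point to watch is that attainment of the numerical radius is defined through the functional $x_0^*$ paired with $x_0$. In a smooth space this functional is forced to be $J(x_0)$, and the argument only uses the facts that it has norm one and that $x_0$ is a unit vector.
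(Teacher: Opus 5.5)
Your proof is correct and matches the paper's own argument: the same chain $\|T\|=r(T)=|J(x_0)(Tx_0)|\le\|Tx_0\|\le\|T\|$ forces norm attainment at $x_0$. The self-adjoint case is likewise obtained from Theorem~\ref{Pacheco-result}.
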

\begin{proof}
     Suppose $T$ attains the numerical radius $r(T)$ at an $x_o \in S_X$. Then
     \[
     \begin{aligned}
         \|T\|=r(T)&=|J(x_o)(Tx_o)| \leq \|Tx_o\|\leq\|T\|.
     \end{aligned}\]
     Hence $T$ attains the norm at the same point $x_0\in S_X$. The particular case follows from Theorem \ref{Pacheco-result}.
\end{proof}
From the above proposition, it is clear that for normal operators on a Hilbert space, the numerical radius attainment implies the norm attainment. As a corollary to Proposition \ref{SA_norm_numer}, we obtain a characterization for the attainment of numerical radius/norm for the special type of self-adjoint operators described in Proposition \ref{SA_normattain} (which includes all self-adjoint operators in the Hilbert space setting) in terms of the eigenvalues.
\begin{corollary}\label{norm-attain-chara}
    Let $T$ be as in  Proposition \ref{SA_normattain}. Then the following statements are equivalent:
\begin{itemize}
    \item[(i)] $T$ attains the numerical radius
    \item[(ii)] $T$ attains the norm
    \item[(iii)] $\|T\|$ or $-\|T\|$ is an eigenvalue of $T$
\end{itemize}
\end{corollary}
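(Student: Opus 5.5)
The plan is to establish the cycle of implications (i) $\Rightarrow$ (ii) $\Rightarrow$ (iii) $\Rightarrow$ (i), each step resting on results already proved above.

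For (i) $\Rightarrow$ (ii), $T$ is self-adjoint, so Theorem \ref{Pacheco-result} gives $r(T)=\|T\|$. Proposition \ref{SA_norm_numer} then applies directly: if the numerical radius is attained at some $x_0\in S_X$, the norm is attained at the same point.

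For (ii) $\Rightarrow$ (iii), the operator $T$ satisfies all the hypotheses of Proposition \ref{SA_normattain}: it is self-adjoint and $\|T\|^2I-T^2$ is strongly normal. That proposition then says norm attainment forces $\|T\|$ or $-\|T\|$ to be an eigenvalue of $T$.

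For (iii) $\Rightarrow$ (i), I will use the observation made at the start of this section. Suppose $\lambda=\pm\|T\|$ is an eigenvalue with unit eigenvector $x_0$. Then
\[
|J(x_0)(Tx_0)|=|\lambda|\,J(x_0)(x_0)=\|T\|=r(T),
\]
where the last equality is again Theorem \ref{Pacheco-result}. Hence the numerical radius is attained at $x_0$.

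None of these steps is a real obstacle, since each is a direct citation. The only point needing care is that the identity $r(T)=\|T\|$ is available precisely because $T$ is self-adjoint; this is the step I would check explicitly.
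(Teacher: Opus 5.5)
Your proof is correct and matches the paper's argument: (i)$\Rightarrow$(ii) by Proposition \ref{SA_norm_numer}, (ii)$\Rightarrow$(iii) by Proposition \ref{SA_normattain}, and (iii)$\Rightarrow$(i) by the eigenvalue observation at the start of the section. In that last step, Theorem \ref{Pacheco-result} is not needed, which is why the paper says it holds for any operator: since $r(T)\le\|T\|$ always, $|J(x_0)(Tx_0)|=\|T\|$ already forces $r(T)=\|T\|$ and its attainment at $x_0$.
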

\begin{proof}
    $(i)\Rightarrow (ii)$ follows from Proposition \ref{SA_norm_numer} and  $(ii)\Rightarrow (iii)$ follows from Proposition \ref{SA_normattain}. Note that $(iii)\Rightarrow (i)$ is true for any operator. 
\end{proof}

But the Crawford numbers can be different from the minium modulus even for self-adjoint operators on Hilbert spaces (as can be seen in the case of $F:\mathbb{K}^2\to \mathbb{K}^2$ given by $F(x_1,x_2)=(x_2,x_1)$), and the attainments also are not simultaneous (for example, $T(x_1, x_2,x_3,\ldots)=(x_2, x_1, \frac{x_3}{3}, \frac{x_4}{4}, \ldots)$ on $\ell^2(\mathbb{N})$ attains the Crawford number, but not the minimum modulus). However, for positive operators on a Hilbert spaces, close relations were established between the minimum modulus and the Crawford numbers (Proposition 2.2 \cite{carvajal2014operators}, Theorem 3.2 in \cite{crawford2024}). In the following propositions, we generalize these relations for certain classes of strongly normal operators on smooth Banach spaces, which are equivalent to positive operators - when considered on Hilbert spaces.

\begin{proposition}\label{min-equal-craw}
    If $T\in B(X)$ is such that $T-c(T)I$ is a strongly normal operator on $X$, then $\mu(T)=c(T)$.
\end{proposition}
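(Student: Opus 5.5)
The plan is to prove the two inequalities $c(T)\le \mu(T)$ and $\mu(T)\le c(T)$ separately. The first holds for every operator on $X$. The second uses the hypothesis that $T-c(T)I=S^2$ with $S$ self-adjoint. Write $c=c(T)$ throughout; note that $c\ge 0$ because it is an infimum of moduli.

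\emph{Step 1: $c(T)\le\mu(T)$.} For any $x\in S_X$ we have $\|J(x)\|=1$, so
\[
|J(x)(Tx)|\le \|Tx\|.
\]
Taking the infimum over $x\in S_X$ on both sides gives $c(T)\le \mu(T)$. No hypothesis on $T$ is needed here.

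\emph{Step 2: the numerical range under the hypothesis.} Let $S$ be self-adjoint with $S^2=T-cI$. For $x\in S_X$, I will compute $J(x)(Tx)$ exactly as in the proof of Proposition~\ref{SA_normattain}, using $S'\circ J=J\circ S$:
\[
J(x)(Tx)=c\,J(x)(x)+J(x)(S^2x)=c+(S'J(x))(Sx)=c+J(Sx)(Sx)=c+\|Sx\|^2 .
\]
Hence every element of $W(T)$ is a real number that is at least $c\ge 0$. So $|J(x)(Tx)|=c+\|Sx\|^2$, and $c(T)$ is the infimum of these values.

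\emph{Step 3: $\mu(T)\le c(T)$.} By Step 2, $c=\inf_{x\in S_X}\left(c+\|Sx\|^2\right)$. Therefore $\inf_{x\in S_X}\|Sx\|=0$, and we can choose $x_n\in S_X$ with $\|Sx_n\|\to 0$. Then
\[
\|Tx_n\|\le c\,\|x_n\|+\|S(Sx_n)\|\le c+\|S\|\,\|Sx_n\|\longrightarrow c ,
\]
which gives $\mu(T)\le c$. Combined with Step 1, this yields $\mu(T)=c(T)$.

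The only step that needs care is Step 2, namely the identity $J(x)(S^2x)=\|Sx\|^2$. It is what makes the numerical range real and bounded below by $c$, so that the infimum defining $c(T)$ forces $\|Sx_n\|\to 0$. Once that identity is in place, the rest is a routine estimate.
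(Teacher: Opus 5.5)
Your proof is correct and follows the paper's route: the identity $J(x)(S^2x)=\|Sx\|^2$ yields a sequence $x_n\in S_X$ with $\|Sx_n\|\to 0$, so $\|(T-c(T)I)x_n\|\le\|S\|\,\|Sx_n\|\to 0$ and $\mu(T)\le c(T)$. Your Step 2, showing $J(x)(Tx)=c(T)+\|Sx\|^2$, justifies $\inf_{x\in S_X}\|Sx\|=0$ more explicitly than the paper, which simply asserts that $c(T-c(T)I)=0$.
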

\begin{proof}
For any $T \in B(X)$, we have $c(T) \leq \mu(T)$. Since $T-c(T)$I is strongly normal, $T$ is positive and  $c(T-c(T)I) =0$. So there exists a sequence $(x_n)$ in $S_X$ such that $J(x_n)((T-c(T)I)x_n)$ tends to $0$ as $n\to \infty$. Due to $$0\leq \|Tx_n\|-c(T) \leq \|(T-c(T)I)x_n\|,$$
if we show that $\|(T-c(T)I)x_n\|$ tends to $0$ as $n\to \infty$, we get $\mu(T)\leq c(T)$.
To show that, consider
\[J(x_n)((T-c(T)I)x_n) = J(x_n)(S^2x_n)=\|Sx_n\|^2.\]
Since $J(x_n)((T-c(T)I)x_n)$ tends to $0$, we get $Sx_n$ converges to $0$, and thus $\|(T-c(T)I)x_n\|$ tends to $0$, proving the result. 
\end{proof}

Note that, for any operator $T$, if $\mu(T)=0$, then since $c(T)\leq \mu(T)$, we have $c(T)=0$. Thus the minimum modulus coincides with the Crawford number for all such operators with  $\mu(T)=0$ (for example non-injective operators or compact operators on infinite-dimensional Banach spaces). For strongly normal operators on smooth Banach spaces, we have the following result in the other direction, as a corollary to the above proposition.

\begin{corollary}
    If $T\in B(X)$ is a strongly normal with $c(T)=0$, then $\mu(T)=0$.
\end{corollary}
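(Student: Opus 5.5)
This is a direct application of Proposition~\ref{min-equal-craw}. Since $c(T)=0$, we have $T-c(T)I=T$. By hypothesis $T$ is strongly normal, so $T-c(T)I$ is strongly normal. Proposition~\ref{min-equal-craw} then gives $\mu(T)=c(T)=0$.

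It is worth recalling why the hypothesis of Proposition~\ref{min-equal-craw} is used only through this identity, to be sure nothing is lost. Write $T=S^2$ with $S$ self-adjoint. Then for $x\in S_X$,
\[
J(x)(Tx)=J(x)(S^2x)=(S'J(x))(Sx)=J(Sx)(Sx)=\|Sx\|^2 .
\]
Since $c(T)=0$, we can choose $x_n\in S_X$ with $J(x_n)(Tx_n)\to 0$. The identity gives $\|Sx_n\|\to 0$, hence $\|Tx_n\|\le\|S\|\,\|Sx_n\|\to 0$, and so $\mu(T)=0$.

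There is no real obstacle here. The only point to check is that the self-adjointness relation $S'\circ J=J\circ S$ converts $J(x)(S^2x)$ into $\|Sx\|^2$, and this was already used in the proof of Proposition~\ref{min-equal-craw}.
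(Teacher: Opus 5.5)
Your proposal is correct and is the same argument as the paper. The paper states this corollary as immediate from Proposition~\ref{min-equal-craw} with $c(T)=0$, so that $T-c(T)I=T$, and your direct check via $J(x)(S^2x)=\|Sx\|^2$ and $\|Tx_n\|\le\|S\|\,\|Sx_n\|$ is just that proposition's proof specialised to this case.
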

In the following proposition, we provide a characterisation for the attainment of the Crawford number in terms of eigenvalues.

\begin{proposition}\label{attainment of crawford}
    Let  $T$ be as in Proposition \ref{min-equal-craw}. Then $T$ attains its Crawford number if and only if $c(T)$ is an eigenvalue of $T$.
\end{proposition}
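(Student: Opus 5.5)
The converse direction follows from the observation made at the start of this section. If $c(T)$ is an eigenvalue of $T$ with a unit eigenvector $x_0$, then
\[
|J(x_0)(Tx_0)|=c(T)\,J(x_0)(x_0)=c(T).
\]
So $T$ attains its Crawford number at $x_0$, and nothing about strong normality is needed here.

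For the forward direction I follow the scheme of Proposition \ref{SA_normattain}. Write $T-c(T)I=S^2$ with $S$ self-adjoint. Suppose $c(T)=|J(x_0)(Tx_0)|$ for some $x_0\in S_X$. Since $T-c(T)I$ is strongly normal, it is positive, as noted in the proof of Proposition \ref{min-equal-craw}. Hence
\[
J(x_0)(Tx_0)=c(T)+J(x_0)\big((T-c(T)I)x_0\big),
\]
where the second term is real and nonnegative. Thus $J(x_0)(Tx_0)$ is a real number at least $c(T)\ge 0$, and its modulus equals $c(T)$. This forces
\[
J(x_0)\big((T-c(T)I)x_0\big)=0.
\]

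Next I use self-adjointness of $S$, exactly as in the proof of Proposition \ref{SA_normattain}:
\[
0=J(x_0)(S^2x_0)=(S'J(x_0))(Sx_0)=J(Sx_0)(Sx_0)=\|Sx_0\|^2 .
\]
So $Sx_0=0$, which gives $(T-c(T)I)x_0=S(Sx_0)=0$. Therefore $Tx_0=c(T)x_0$ with $x_0\neq 0$, and $c(T)$ is an eigenvalue of $T$.

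The main point requiring care is removing the modulus in the attainment condition. A priori we only know $|J(x_0)(Tx_0)|=c(T)$, not $J(x_0)(Tx_0)=c(T)$. Positivity of $S^2$ settles this: $J(x_0)(Tx_0)$ lies on the real half-line $[c(T),\infty)$, so its modulus can equal $c(T)$ only when the nonnegative term vanishes. The rest is the same duality-map computation already used in Propositions \ref{SA_normattain} and \ref{min-equal-craw}.
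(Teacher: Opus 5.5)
Your proof is correct and follows essentially the same route as the paper: write $T-c(T)I=S^2$, use positivity to get $J(x_0)((T-c(T)I)x_0)=0$, and then $J(x_0)(S^2x_0)=\|Sx_0\|^2$ forces $Tx_0=c(T)x_0$, with the converse holding for any operator. You also show explicitly that $J(x_0)(Tx_0)=c(T)+\|Sx_0\|^2$ is real and at least $c(T)$, so the modulus in the attainment condition can be dropped; the paper only indicates this step by remarking that $T$ is positive.
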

\begin{proof}
    As we have mentioned, when $T-c(T)$I is a strongly normal operator, $T$ is positive. Suppose $T$ attains its Crawford number at some $x_o \in S_X$. Then taking $T-c(T)I=S^2$ for some self-adjoint operator $S$, we get
   \[0=J(x_o)((T-c(T)I)x_o) = J(x_o)(S^2x_o) = J(Sx_o)(Sx_o) = \|Sx_o\|^2.\] 
    This implies that $Tx_o = c(T)x_o$. The converse is true for any operator.
\end{proof}
The following corollary is now immediate.

\begin{corollary}
     Let $T$ be as in Proposition \ref{min-equal-craw}. If $T$ attains its Crawford number, then $T$ attains the minimum modulus.
\end{corollary}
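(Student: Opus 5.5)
The plan is to chain the two preceding results directly. Let $T$ satisfy the hypothesis of Proposition \ref{min-equal-craw}, that is, $T-c(T)I$ is strongly normal, and suppose $T$ attains its Crawford number.

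First, Proposition \ref{attainment of crawford} applies, so $c(T)$ is an eigenvalue of $T$. Fix a unit eigenvector $x_0\in S_X$ with $Tx_0=c(T)x_0$. Then $\|Tx_0\|=c(T)\|x_0\|=c(T)$, using $c(T)\geq 0$.

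Second, Proposition \ref{min-equal-craw} gives $\mu(T)=c(T)$. Combining the two steps,
\[
\|Tx_0\|=c(T)=\mu(T),
\]
so $T$ attains its minimum modulus at the same vector $x_0$.

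There is no real obstacle. The only point to check is that the eigenvector can be normalised to lie in $S_X$, which is immediate. Alternatively, one can skip the eigenvalue step and reuse the point $x_0$ from the proof of Proposition \ref{attainment of crawford}, where it was already shown that $Tx_0=c(T)x_0$.
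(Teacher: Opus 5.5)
Your proof is correct and is exactly the immediate deduction the paper has in mind. Proposition~\ref{attainment of crawford} supplies a unit vector $x_0$ with $Tx_0=c(T)x_0$, so $\|Tx_0\|=c(T)$ since $c(T)\ge 0$, and Proposition~\ref{min-equal-craw} gives $\mu(T)=c(T)=\|Tx_0\|$.
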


Combining these results on the attainment of minimum modulus (Proposition \ref{minimum attain eigenvalue}) and Crawford number (Proposition \ref{attainment of crawford}), we have the following result analogous to Corollary \ref{norm-attain-chara}.
\begin{proposition}
    Let $T\in B(X)$ be such that both $T-c(T)I$ and $T^2-c(T)^2I$ are strongly normal. Then the following are equivalent:
    \begin{itemize}
        \item[i.] $T$ attains the minimum modulus
        \item[ii.] $T$ attains the Crawford number
        \item[iii.] $\mu(T)$ is an eigenvalue of $T$ 
    \end{itemize}
\end{proposition}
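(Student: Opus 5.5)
The plan is to reduce everything to Proposition \ref{min-equal-craw} and Proposition \ref{attainment of crawford}, using the hypothesis on $T^2-c(T)^2I$ in place of the hypothesis of Proposition \ref{minimum attain eigenvalue}.

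\textbf{Step 1.} Since $T-c(T)I$ is strongly normal, Proposition \ref{min-equal-craw} gives $\mu(T)=c(T)$. Hence the hypothesis that $T^2-c(T)^2I$ is strongly normal reads exactly as $T^2-\mu(T)^2I$ strongly normal. Also, $T-c(T)I=S^2$ with $S$ self-adjoint, and sums of self-adjoint operators and scalar multiples $\alpha I$ with $\alpha$ real are self-adjoint (since $J$ is real-homogeneous on the relevant terms, $(T+\alpha I)'J(x)=T'J(x)+\alpha J(x)=J(Tx)+\alpha J(x)$). So we still need $T$ itself to be self-adjoint in order to invoke Proposition \ref{minimum attain eigenvalue}.

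\textbf{Step 2: the main obstacle.} We need $J(Tx)=J(Tx)$ in the form $J(S^2x+cx)=J(S^2x)+cJ(x)$. This additivity is not automatic, since $J$ is not linear. To avoid it, I would instead prove (i)$\Rightarrow$(iii) directly, copying the proof of Proposition \ref{SA_normattain}. Suppose $\|Ta\|=\mu(T)$ with $a\in S_X$, and write $T^2-\mu(T)^2I=R^2$ with $R$ self-adjoint. We want
\[
J(a)(T^2a)=\mu(T)^2 .
\]
For this we still need $T'J(a)=J(Ta)$. So the cleaner route is to prove (i)$\Rightarrow$(ii) without self-adjointness. If $\|Ta\|=\mu(T)=c(T)$, then
\[
c(T)\le |J(a)(Ta)|\le \|Ta\|=c(T),
\]
so $T$ attains its Crawford number at $a$. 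This gives (i)$\Rightarrow$(ii) with no use of the second hypothesis.

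\textbf{Step 3.} For (ii)$\Rightarrow$(iii), Proposition \ref{attainment of crawford} shows that $c(T)$ is an eigenvalue, and $c(T)=\mu(T)$ by Step 1.

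\textbf{Step 4.} For (iii)$\Rightarrow$(i), take a unit eigenvector $x_0$ with $Tx_0=\mu(T)x_0$. Then $\|Tx_0\|=\mu(T)$, so $T$ attains its minimum modulus.

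Thus the cycle (i)$\Rightarrow$(ii)$\Rightarrow$(iii)$\Rightarrow$(i) closes, and the hypothesis on $T^2-c(T)^2I$ is not needed. Alternatively, it could enter through Proposition \ref{minimum attain eigenvalue} if $T$ is known to be self-adjoint. The step I would double-check is the inequality $|J(a)(Ta)|\le\|Ta\|$ combined with $c(T)\le|J(a)(Ta)|$; both hold because $J(a)\in S_{X^*}$ and $J(a)(Ta)\in W(T)$.
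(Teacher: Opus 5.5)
Your argument is correct, and it takes a genuinely different and more economical route than the paper's. The paper gets the result by combining Proposition \ref{minimum attain eigenvalue} for (i)$\Leftrightarrow$(iii) with Proposition \ref{attainment of crawford} for (ii)$\Leftrightarrow$(iii). To apply the first, it uses Proposition \ref{min-equal-craw} to turn the hypothesis on $T^2-c(T)^2I$ into one on $T^2-\mu(T)^2I$; the second hypothesis is evidently there only for that purpose. You replace that step by a one-line observation: once $\mu(T)=c(T)$, any $a\in S_X$ with $\|Ta\|=\mu(T)$ satisfies $c(T)\le|J(a)(Ta)|\le\|Ta\|=c(T)$. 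You then close the cycle with Proposition \ref{attainment of crawford} and the trivial (iii)$\Rightarrow$(i).

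This buys two things. First, it shows that the hypothesis on $T^2-c(T)^2I$ is superfluous. Second, it avoids a weak point of the paper's route. Proposition \ref{minimum attain eigenvalue} is stated for self-adjoint $T$, but self-adjointness of $T=S^2+c(T)I$ is neither assumed nor proved here, and it does not follow formally because $J$ is not additive. That proposition also only delivers ``$\mu(T)$ or $-\mu(T)$'', so positivity of $T$ would still be needed to discard $-\mu(T)$. As a bonus, your route shows that every point where $\mu(T)$ is attained is itself an eigenvector for $\mu(T)$: positivity gives $J(a)(Ta)=c(T)$, hence $Sa=0$.

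In the write-up, delete the aside in Step 1 asserting that $T+\alpha I$ is self-adjoint whenever $T$ is self-adjoint and $\alpha\in\mathbb{R}$. It is false in general. On $\ell^4(\mathbb{N})$, take the swap operator $S$ of the paper's example and test $S+I$ at $x=e_1$: $(S+I)'J(e_1)=J(e_2)+J(e_1)$ has norm $2^{3/4}$, while $\|J((S+I)e_1)\|=2^{1/4}$. Your own Step 2 already explains why this fails. Also remove the garbled line ``$J(Tx)=J(Tx)$''. Neither is used in your final argument.
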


\begin{remark}
    If $T$ is a positive operator on a Hilbert space, then both $T-c(T)I$ and $T^2-c(T)^2I$ are strongly normal. Hence, the above proposition generalizes Proposition 3.4 of \cite{crawford2024}. 
\end{remark}

Regarding the norms of the powers of an operator, it is known that if $T$ is a normal operator on a Hilbert space, then $\|T^n\|\|T\|^n$ for each $n\in \mathbb{N}$ (\cite{nair2021functional}), and the numerical radius also has the property that $r(T^n)=r(T)^n$ (\cite{limaye1996functional}) for such an operator $T$. In \cite{banachSA}, it is proved that for a self-adjoint operator $T$ on a smooth Banach space, one has $\|T^{2^n}\|=\|T\|^{2^n}.$ Here, we establish the same result for more general powers of the self-adjoint operator using the spectral mapping theorem.

\begin{proposition}
    Let $T\in B(X)$ be a self-adjoint operator. Then $\|T^n\|=\|T\|^n$ for all $n \in \mathbb{N}$. In particular $r(T^n)=r(T)^n$.
\end{proposition}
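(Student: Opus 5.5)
The plan is to deduce the result from Theorem~\ref{Pacheco-result} and the spectral mapping theorem. The point is that $T^n$ need not be self-adjoint in general, because $(T^n)'\circ J = J\circ T^n$ is not clear unless $T'$ and $J$ interact well with composition. So I will not try to prove that $T^n$ is self-adjoint. Instead I will squeeze $\|T^n\|$ between quantities I can control.

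\textbf{Upper bound.} Submultiplicativity gives $\|T^n\|\le \|T\|^n$ directly.

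\textbf{Lower bound.} For any bounded operator, $\|T^n\|\ge \rho(T^n)$. By the spectral mapping theorem for polynomials, $\sigma(T^n)=\{\lambda^n:\lambda\in\sigma(T)\}$, so
\[
\rho(T^n)=\sup\{|\lambda|^n:\lambda\in\sigma(T)\}=\rho(T)^n .
\]
Theorem~\ref{Pacheco-result} gives $\rho(T)=\|T\|$ because $T$ is self-adjoint. Chaining these yields
\[
\|T\|^n=\rho(T)^n=\rho(T^n)\le \|T^n\|\le\|T\|^n,
\]
so $\|T^n\|=\|T\|^n$. It is worth noting that this already uses Theorem~\ref{Pacheco-result} in the form $\rho(T)=\|T\|$, so the only real input is that $T$ is normaloid with respect to the spectral radius.

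\textbf{Numerical radius.} This part is the main obstacle, since Theorem~\ref{Pacheco-result} applies only to $T$ and not to $T^n$. For the upper bound I use the standard inequality $r(S)\le\|S\|$, valid for every $S$: each $|J(x)(Sx)|\le\|Sx\|$. For the lower bound I use the Banach space fact that $\sigma(S)\subseteq\overline{W(S)}$, from Bonsall--Duncan numerical range theory. This holds for the spatial numerical range up to closed convex hull, so in particular $\rho(S)\le r(S)$. Applying both to $S=T^n$ gives
\[
\rho(T^n)\le r(T^n)\le\|T^n\|.
\]
Both ends equal $\|T\|^n$ by the previous steps, and $\|T\|^n=r(T)^n$ by Theorem~\ref{Pacheco-result}. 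Hence $r(T^n)=r(T)^n$.

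If one prefers to avoid citing $\rho\le r$, a fallback is available. Since $\|T^n\|=\rho(T^n)$, it suffices to find unit vectors $x_k$ with $|J(x_k)(T^nx_k)|\to\|T\|^n$. Take $\lambda\in\sigma(T)$ with $|\lambda|=\rho(T)=\|T\|$. Such a $\lambda$ lies in the boundary of the spectrum, hence in $\sigma_{app}(T)$. Choose approximate eigenvectors $x_k\in S_X$ with $(T-\lambda I)x_k\to 0$. Then $(T^n-\lambda^nI)x_k\to0$, because $T^n-\lambda^n I=(T-\lambda I)\,p(T)$ for a polynomial $p$, and the factors commute. It follows that $J(x_k)(T^nx_k)\to\lambda^n$, which gives $r(T^n)\ge\|T\|^n$ directly.
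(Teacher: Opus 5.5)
Your argument is correct and takes a different route from the paper. However, the reason you give for taking it is mistaken: $T^n$ \emph{is} self-adjoint. Since $(T^n)'=(T')^n$ and $T'J=JT$ holds pointwise, induction gives $(T')^nJ(x)=(T')^{n-1}J(Tx)=\cdots=J(T^nx)$, and this uses no linearity of $J$. The paper cites Proposition 2.2 of \cite{banachSA} for this fact.

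The paper's proof rests on exactly this observation. Theorem~\ref{Pacheco-result} applied to $T^n$ gives $\rho(T^n)=\|T^n\|=r(T^n)$ at once. The whole argument is then the spectral mapping identity $\rho(T^n)=\rho(T)^n$ together with $\rho(T)=\|T\|=r(T)$.

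You instead sandwich. For the norm you use $\rho(T^n)\le\|T^n\|\le\|T\|^n$, and for the numerical radius you use $\rho(S)\le r(S)\le\|S\|$ with $S=T^n$. This costs you the extra Banach-space input $\rho(S)\le r(S)$, and you justify it correctly in either of two ways:
\begin{itemize}
\item via the safe form $\sigma(S)\subseteq\overline{\mathrm{co}}\,W(S)$, since passing to the closed convex hull does not change the supremum of moduli;
\item via your approximate-eigenvector argument at a point $\lambda\in\sigma(T)$ with $|\lambda|=\rho(T)$, which is self-contained and clean.
\end{itemize}

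In return your route buys generality. Self-adjointness enters only through $\rho(T)=\|T\|$. Note that $\rho(T)=\|T\|$ already forces $r(T)=\|T\|$, because $\rho\le r\le\|\cdot\|$. So your proof shows $\|T^n\|=\|T\|^n$ and $r(T^n)=r(T)^n$ for every $T\in B(X)$ whose spectral radius equals its norm.
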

\begin{proof}
    Since $T$ is self-adjoint, we have $\rho(T)=\|T\|$ (Theorem \ref{Pacheco-result}). Hence
    $$
    \begin{aligned}
        \|T\|^n = \rho(T)^n &= sup\{|\lambda| : \lambda \in \sigma(T) \}^n \\
        &= sup\{|\lambda| : \lambda \in \sigma(T^n) \} \\
        &= \rho(T^n) = \|T^n\|,
    \end{aligned}
    $$
    since $T^n$ is also self-adjoint. The particular case follows from $r(T)=\|T\|$. 
\end{proof}

Analogous to this, for the minimum modulus of a positive operator $T$ on a Hilbert space, it is proved (\cite{carvajal2014operators}) that $\mu(T^n)=\mu(T)^n$ for all $n\in \mathbb{N}$. We can extend this result to the set of all self-adjoint operators on reflexive smooth Banach spaces. Before proving the result, we establish a result on the invertibility of a self-adjoint operator.

\begin{lemma}\label{SA_non_inv_bound}
    A self-adjoint operator $T$ on a reflexive smooth Banach space $X$ is invertible if and only if it is bounded below.
\end{lemma}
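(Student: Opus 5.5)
The forward direction is immediate: an invertible operator $T$ satisfies $\|x\|=\|T^{-1}Tx\|\le \|T^{-1}\|\,\|Tx\|$, so $T$ is bounded below. All the work is in the converse. Assume $T$ is self-adjoint and bounded below, say $\|Tx\|\ge m\|x\|$ for some $m>0$. Then $T$ is injective and has closed range, since a Cauchy sequence $Tx_n$ forces $x_n$ to be Cauchy. It therefore suffices to show that $R(T)$ is dense in $X$; the bounded inverse on $R(T)=X$ then follows from the bounded-below estimate.

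To prove density I would argue by contradiction using Hahn--Banach. If $\overline{R(T)}\neq X$, there is a nonzero $f\in X^*$ with $f(Tx)=0$ for all $x\in X$, i.e.\ $T'f=0$. Because $X$ is smooth and reflexive, the duality map $J$ is surjective, so $f=J(y)$ for some $y\neq 0$ (note $J(0)=0$ and $\|J(y)\|=\|y\|$, so $y\neq0$). Self-adjointness, in the form $T'\circ J=J\circ T$, gives
\[
J(Ty)=T'J(y)=T'f=0 .
\]
Since $\|J(Ty)\|=\|Ty\|$, this forces $Ty=0$, contradicting injectivity of $T$, which we have from boundedness below. Hence $R(T)$ is dense, and being closed it equals $X$.

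The key step, and really the only nontrivial one, is converting an annihilating functional into a vector in the kernel of $T$. Here reflexivity, via surjectivity of $J$, is exactly what is needed. Strict convexity is not required, because we only use $\|J(z)\|=\|z\|$ rather than injectivity of $J$. Finally, I would remark that $T^{-1}$ is again self-adjoint. Applying $T'\circ J=J\circ T$ at $T^{-1}x$ gives $T'(J(T^{-1}x))=J(x)$, and since $(T')^{-1}=(T^{-1})'$, this yields $J(T^{-1}x)=(T^{-1})'J(x)$. This may be useful for the subsequent result on $\mu(T^n)$.
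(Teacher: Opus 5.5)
Your proof is correct and follows essentially the same route as the paper. The paper invokes $\sigma(T)=\sigma_{app}(T)\cup\sigma_{eig}(T')$ to reduce to the case $T'f=0$ with $f\neq 0$, and then uses surjectivity of $J$ together with $T'J=JT$ to produce a nonzero $x$ with $Tx=0$, which is exactly your key step; you unpack that spectral decomposition yourself via the closed-range argument and Hahn--Banach, which makes your argument self-contained.
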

\begin{proof}
    Suppose $T$ is not invertible. Then  $0 \in \sigma(T)=\sigma_{app}(T)\cup  \sigma_{eig}(T')$ (Proposition 13.8, \cite{limaye1996functional}).  If $0 \in \sigma_{eig}(T')$, then $\|T'f\|=0$ for some nonzero $f$ in $X^*$. Since $X$ is reflexive, $J$ is onto (Theorem 2, \cite{dragomir2003}), and hence
$$
\|T'f\| = \|T'J(x)\| = \|JTx\|=\|Tx\|,
$$
for some $x\neq 0$. This implies $\|Tx\|=0$, and so $0 \in \sigma_{eig}(T)\subseteq \sigma_{app}(T)$, proving that $T$ is not bounded below. The other part is clear. 
\end{proof}
Now we prove the result on minimum modulus of the powers of a self-adjoint operator, extending the existing result known on positive operators on Hilbert spaces, in this regard.
\begin{theorem}(compare Proposition 2.5, \cite{carvajal2014operators})\label{mu_power}
    For a self-adjoint operator $T$ on a reflexive smooth Banach space $X$, $\mu(T^{n})=\mu(T)^{n}$ for all $n \in \mathbb{N}$.
\end{theorem}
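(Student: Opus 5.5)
The idea is to use inverses: minimum modulus of an invertible operator is the reciprocal of the norm of its inverse. For the inverse of a self-adjoint operator, the norm-power identity from the preceding proposition ($\|S^n\|=\|S\|^n$ for self-adjoint $S$) then transfers to $\mu$.

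\emph{Case 1: $T$ is not bounded below.} Then $\mu(T)=0$. By Lemma~\ref{SA_non_inv_bound}, $T$ is not invertible, so $T^n$ is not invertible either. Since $T^n$ is again self-adjoint (because $(T^n)'J=(T')^nJ=JT^n$), Lemma~\ref{SA_non_inv_bound} applied to $T^n$ shows that $T^n$ is not bounded below. Hence $\mu(T^n)=0=\mu(T)^n$. Alternatively, one can argue directly: if $\|Tx_k\|\to0$ with $x_k\in S_X$, then $\|T^nx_k\|\le\|T\|^{n-1}\|Tx_k\|\to0$.

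\emph{Case 2: $T$ is bounded below.} By Lemma~\ref{SA_non_inv_bound}, $T$ is invertible. I would first show that $T^{-1}$ is self-adjoint. Starting from $T'J=JT$, substitute $x=T^{-1}y$ to get $T'J(T^{-1}y)=J(y)$. Applying $(T')^{-1}=(T^{-1})'$ gives $J(T^{-1}y)=(T^{-1})'J(y)$, which is exactly $(T^{-1})'J=JT^{-1}$. Next, recall the standard identity $\mu(A)=1/\|A^{-1}\|$ for invertible $A$, which follows from $\|x\|=\|A^{-1}Ax\|\le\|A^{-1}\|\|Ax\|$ together with the sup definition of $\|A^{-1}\|$. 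Then, using the preceding proposition for the self-adjoint operator $T^{-1}$,
\[
\mu(T^n)=\frac{1}{\|(T^n)^{-1}\|}=\frac{1}{\|(T^{-1})^n\|}=\frac{1}{\|T^{-1}\|^n}=\mu(T)^n .
\]

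The main point needing care is the self-adjointness of $T^{-1}$, and in particular the identity $(T')^{-1}=(T^{-1})'$. This holds because $T'(T^{-1})'=(T^{-1}T)'=I$ and $(T^{-1})'T'=(TT^{-1})'=I$. Reflexivity enters only through Lemma~\ref{SA_non_inv_bound}, which is needed so that bounded below implies invertible in Case 2.
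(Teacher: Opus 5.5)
Your proof is correct and follows essentially the paper's route. You reduce via Lemma~\ref{SA_non_inv_bound} to the invertible case, use $\mu(A)=\|A^{-1}\|^{-1}$ with the self-adjointness of $T^{-1}$, and rely on $\rho(S)=\|S\|$ for self-adjoint $S$ plus spectral mapping. You package that last step through the preceding proposition $\|S^n\|=\|S\|^n$ for $S=T^{-1}$, where the paper computes $\mathrm{dist}(0,\sigma(T^n))=\mathrm{dist}(0,\sigma(T))^n$ inline; your direct check of $(T^{-1})'J=JT^{-1}$ and the elementary estimate $\|T^nx_k\|\le\|T\|^{n-1}\|Tx_k\|$ (which needs no reflexivity) are small bonuses.
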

\begin{proof}
     We first consider the case when $T$ is invertible. It is known that $T^n$ and $T^{-1}$ is also self-adjoint (Proposition 2.2, \cite{banachSA}) and $\rho(T) = \|T\|$ (Theorem \ref{Pacheco-result}). Using Theorem 10.2.12 in \cite{nair2021functional}, we get
    $$
        \|T^{-1}\| = \frac{1}{dist(0,\sigma(T))}.
    $$
    Since $\mu(T) = \|T^{-1}\|^{-1}$, we have 
    $$
    \begin{aligned}
    \mu(T^n) = \|(T^n)^{-1}\|^{-1} &= dist(0,\sigma(T^n))\\
    &= inf\{|\lambda|:\lambda \in \sigma(T^n) \}\\
    &= dist(0,\sigma(T))^n\\
    &= \|T^{-1}\|^{-n}\\ &= \mu(T)^n.
    \end{aligned}
    $$
Now, if $T$ is not invertible, by Lemma \ref{SA_non_inv_bound}, we have $\mu(T)=0$. Since $T^n$ is self-adjoint and non-invertible, we get $\mu(T^n)=0$, proving the claim.
\end{proof}

Now, with respect to the Crawford numbers of powers of operators on Hilbert spaces, it is shown (\cite{crawford2024}) that $c(T^2)=c(T)^2$ for all positive operators, from which it follows that $c(T^{2^n})=c(T)^{2^n}$ for $n\in \mathbb{N}$. Here, we consider this problem for self-adjoint operators on reflexive smooth Banach spaces and obtain results that improve the existing results.
\begin{proposition}\label{even-power}
    For a self-adjoint operator $T$ on a reflexive smooth Banach space $X$, we have $c(T^{2n})=\mu(T^{2n}),\,n\in \mathbb{N}$. In addition, if $c(T)=\mu(T)$, then $c(T^{2n})=c(T)^{2n}$ for all $n \in \mathbb{N}$.
\end{proposition}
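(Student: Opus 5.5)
The plan is to realise $T^{2n}$ as a strongly normal operator with zero shift and then apply Proposition \ref{min-equal-craw}. Put $S=T^{n}$. By Proposition 2.2 of \cite{banachSA}, powers of self-adjoint operators are self-adjoint, so $S$ is self-adjoint and $T^{2n}=S^2$ is strongly normal. Proposition \ref{min-equal-craw} needs $T^{2n}-c(T^{2n})I$ to be strongly normal, and a shifted square need not be a square. So I will not invoke that proposition directly; instead I will rerun its argument for $T^{2n}$ itself.

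\textbf{Step 1: $c(T^{2n})=\mu(T^{2n})$.} The inequality $c(T^{2n})\le\mu(T^{2n})$ holds for every operator, since $|J(x)(Ax)|\le\|Ax\|$ for $x\in S_X$. For the reverse, take $x\in S_X$ and compute
\[
J(x)(T^{2n}x)=(S'J(x))(Sx)=J(Sx)(Sx)=\|T^nx\|^2 .
\]
Hence
\[
c(T^{2n})=\inf_{x\in S_X}\|T^nx\|^2=\mu(T^n)^2 .
\]
Theorem \ref{mu_power} applies because $X$ is reflexive and smooth, so $\mu(T^n)^2=\mu(T)^{2n}=\mu(T^{2n})$. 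This gives $c(T^{2n})=\mu(T^{2n})$.

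This computation already proves the sharper identity $c(T^{2n})=\mu(T)^{2n}$. The only real content is the reduction to Theorem \ref{mu_power}, which is where reflexivity is used: it passes through Lemma \ref{SA_non_inv_bound} and the spectral mapping theorem.

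\textbf{Step 2: the case $c(T)=\mu(T)$.} If $c(T)=\mu(T)$, Step 1 gives
\[
c(T^{2n})=\mu(T)^{2n}=c(T)^{2n}.
\]

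I expect no serious obstacle beyond checking that the identity $J(x)(S^2x)=J(Sx)(Sx)$ holds. It does, directly from the definition $S'\circ J=J\circ S$, exactly as in the proofs above. The remaining care is in citing Theorem \ref{mu_power} correctly, including its non-invertible case.
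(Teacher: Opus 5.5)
Your proof is correct and is essentially the paper's argument: the identity $J(x)(T^{2n}x)=\|T^nx\|^2$ gives $c(T^{2n})=\mu(T^n)^2$, and Theorem~\ref{mu_power} then yields both $c(T^{2n})=\mu(T^{2n})$ and, when $c(T)=\mu(T)$, $c(T^{2n})=c(T)^{2n}$. Your opening mention of Proposition~\ref{min-equal-craw} is unnecessary, since, as you note yourself, the argument never uses it.
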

\begin{proof}
    Let $n\in \mathbb{N}$. Observe that 
    \[\|T^nx\|^2=J(T^nx)(T^nx)=T^{n\prime}J(x)(T^nx)=J(x)(T^{2n}x).\]
    Now, taking infimum over $x \in S_X$, we get $\mu(T^n)^2=c(T^{2n})$. This gives $c(T^{2n})=\mu(T^{2n})$ due to Theorem \ref{mu_power}. Now, if $c(T)=\mu(T)$, then we get
    \[c(T^{2n})=\mu(T^{2n})=\mu(T)^{2n}=c(T)^{2n},\]
    proving the claim.
\end{proof}
\begin{remark}
    If $X$ is a reflexive smooth Banach space, and $T$ is a strongly normal operator, say, $T=S^2$, on $X$ such that $\mu(S)=c(S)$, then from the proof of Proposition \ref{even-power}, it follows that 
    $c(T^n)=c(T)^n$ for all $n\in \mathbb{N}$.
\end{remark}
The result in Proposition \ref{min-equal-craw} leads to the following corollary.
\begin{corollary}
    Let $T$ be a self-adjoint operator on a reflexive smooth Banach space such that  $T-c(T)I$ is strongly normal. Then $c(T^{2n})=c(T)^{2n}$, $n \in \mathbb{N}$.
\end{corollary}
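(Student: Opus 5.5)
The corollary should follow by combining Proposition \ref{min-equal-craw} with the second part of Proposition \ref{even-power}.

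First, take $T$ self-adjoint on a reflexive smooth Banach space $X$, with $T-c(T)I$ strongly normal. Every reflexive smooth Banach space is in particular smooth, so Proposition \ref{min-equal-craw} applies directly. It gives $\mu(T)=c(T)$.

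Second, $T$ is self-adjoint and $X$ is reflexive and smooth, so the hypotheses of Proposition \ref{even-power} hold. Its additional assumption $c(T)=\mu(T)$ was established in the first step. The second assertion of that proposition then yields $c(T^{2n})=c(T)^{2n}$ for every $n\in\mathbb{N}$.

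For completeness, the chain behind this is
\[
c(T^{2n})=\mu(T^{2n})=\mu(T)^{2n}=c(T)^{2n}.
\]
The first equality comes from $\|T^nx\|^2=J(x)(T^{2n}x)$ together with Theorem \ref{mu_power}, as in the proof of Proposition \ref{even-power}. The second equality is Theorem \ref{mu_power} applied to the self-adjoint operator $T$. The third uses $\mu(T)=c(T)$.

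There is no real obstacle here. The only point to check is that the standing hypotheses of each cited result are met: Proposition \ref{min-equal-craw} needs only smoothness, while Proposition \ref{even-power} and Theorem \ref{mu_power} need reflexivity and self-adjointness, all of which the corollary assumes.
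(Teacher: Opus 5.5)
Your proposal is correct and matches the paper's route exactly. The paper derives this corollary by using Proposition \ref{min-equal-craw} to get $\mu(T)=c(T)$ and then invoking the second part of Proposition \ref{even-power} to conclude $c(T^{2n})=c(T)^{2n}$.
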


The example given after Proposition 3.6 in \cite{crawford2024} shows that $c(T^2)\neq c(T)^2$ is possible for a self-adjoint operator $T$ on a Hilbert space, if $c(T)\neq \mu(T)$. We conclude this session by providing an example to show that the result in Theorem \ref{mu_power} need not hold if the operator is not self-adjoint.
\begin{example}
    Consider $T : (\C^2,\|\cdot\|_2) \rightarrow (\C^2, \|\cdot\|_2)$ given by \[T(x_1,x_2)=(x_1+x_2,x_2),\quad (x_1,x_2)\in \C^2.\] 
    Since $T^*T$ is a positive operator on a finite-dimensional space, we have \[c(T^*T) = min\{\lambda : \lambda \in \sigma(T^*T)\}=\mu(T)^2,\] 
    by Proposition 3.1 in \cite{crawford2024}. Since the smallest eigenvalue of $T$ is $\frac{3-\sqrt{5}}{2}$, we get $c(T^*T) = \frac{3-\sqrt{5}}{2} = \mu(T)^2$. By a similar computation, we get $\mu(T^2)=c({T^2}^*T^2) = 3-2\sqrt{2}.$ Thus $\mu(T^2) \neq \mu(T)^2$.
\end{example}

\section{Classes of normal operators and unitary operators}
Due to the non-linearity of the duality map, extending the concept of normal operators to smooth Banach spaces in a conventional manner is difficult (\cite{banachSA}). Here we introduce the notion of a class of normal operators on smooth Banach spaces, which contains the class of all self-adjoint operators. We also show that this class of operators satisfy many of the known properties of the classical normal operators, defined on Hilbert spaces.
\begin{definition}\label{normal-defn}
    Let $X$ be a smooth Banach space and $T \in B(X)$. Then $T$ is said to be a normal operator if 
    $$
    \|Tx\| = \|T'Jx\| \, \mbox{ for all } x \in X.
    $$
\end{definition}

Analogous to the notion of unitary operators on Hilbert spaces, we also introduce the concept of unitary operators on smooth Banach spaces.
\begin{definition}\label{unitary-defn}
    Let $X$ be a smooth Banach space and $T \in B(X)$. Then $T$ is said to be unitary if it satisfy 
\[    \|Tx\| = \|T'Jx\| = \|x\| \,\mbox{ for all } x \in X.\]
\end{definition}
\begin{remark}
    It is well known that a bounded linear operator on a Hilbert space $H$ is normal if and only if $\|Tx\|=\|T^*x\|$ for all $x\in H$ (\cite{limaye1996functional}). If $J$ is the duality map on a Hilbert space $H$, then $T^*=J^{-1}T^{\prime}J$ (\cite{banachSA}), and hence for all $x\in H$, \[\|T^{*}x\|=\|J^{-1}T^{\prime}Jx\|=\|T^{\prime}Jx\|,\] 
    it follows that the Definition \ref{normal-defn} and Definition \ref{unitary-defn} extends the existing concepts of normal and unitary operators to smooth Banach space settings respectively.
\end{remark}
It is obvious that unitary operators are normal. When $T$ is a self-adjoint operator, due to the equality
\[\|T^{\prime}J(x)\|=\|JT(x)\|=\|Tx\|,\] 
it follows that every self-adjoint operator on $X$ is also normal. 

Similar to the classical characterization of unitary operators in the Hilbert space setting, we can characterize the unitary operators on reflexive smooth Banach spaces.

\begin{theorem}\label{unitary_charc}
    Let $X$ be a reflexive smooth Banach space and $T \in B(X)$. Then $T$ is unitary if and only if $T$ is a surjective isometry.
\end{theorem}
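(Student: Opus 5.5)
The plan is to prove the two implications separately. Reflexivity enters only through the surjectivity of the duality map $J$ (Theorem 2, \cite{dragomir2003}), which is needed in the forward direction.

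\emph{Unitary $\Rightarrow$ surjective isometry.} If $T$ is unitary, then $\|Tx\|=\|x\|$ for all $x$, so $T$ is an isometry. In particular $T$ is bounded below, hence injective with closed range $R(T)$. To show surjectivity, suppose instead that $R(T)\neq X$.
\begin{itemize}
\item Since $R(T)$ is a closed proper subspace, the Hahn--Banach theorem gives a nonzero $f\in X^*$ with $f|_{R(T)}=0$. Then $(T'f)(y)=f(Ty)=0$ for every $y\in X$, so $T'f=0$.
\item Since $X$ is reflexive, $J$ is onto, so $f=J(x)$ for some $x\in X$. Moreover $x\neq 0$, because $\|x\|=\|J(x)\|=\|f\|\neq 0$.
\item The unitary condition then gives $\|x\|=\|T'J(x)\|=\|T'f\|=0$, a contradiction.
\end{itemize}
Hence $T$ is surjective. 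This is the same mechanism as in Lemma~\ref{SA_non_inv_bound}: a nonzero element of $\ker T'$ is pulled back through the surjective $J$ and contradicts the norm condition on $T'J$.

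\emph{Surjective isometry $\Rightarrow$ unitary.} Let $T$ be a surjective isometry. Then $\|Tx\|=\|x\|$ holds, so only $\|T'J(x)\|=\|x\|$ remains, and we may assume $x\neq 0$.
\begin{itemize}
\item Upper bound: $\|T'\|=\|T\|=1$, so $\|T'J(x)\|\le \|J(x)\|=\|x\|$.
\item Lower bound: by surjectivity put $y=T^{-1}x$, so $\|y\|=\|x\|$. Then
\[(T'J(x))(y)=J(x)(Ty)=J(x)(x)=\|x\|^2,\]
and therefore $\|T'J(x)\|\ge \|x\|^2/\|y\|=\|x\|$.
\end{itemize}
Together these give $\|T'J(x)\|=\|x\|$, so $T$ is unitary. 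Reflexivity is not used in this direction.

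The main obstacle is the surjectivity in the forward direction. An isometry need not be onto (for example, the shift on $\ell^2$), so the condition $\|T'Jx\|=\|x\|$ has to rule out a nontrivial annihilator of $R(T)$. This is exactly where the surjectivity of $J$, and hence reflexivity, is essential. Everything else is a routine norm estimate.
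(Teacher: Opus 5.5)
Your proof is correct and essentially matches the paper's. The converse direction is the same computation. In the forward direction, the paper uses surjectivity of $J$ to get $\|T'f\|=\|f\|$ for all $f\in X^*$, then cites that $T'$ bounded below forces $T$ to be onto; you extract only $\ker T'=\{0\}$ from the same pull-back through $J$ and combine it with closedness of $R(T)$ (automatic for an isometry) and Hahn--Banach, which is a self-contained proof of the special case of that cited result that you need.
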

\begin{proof}
    If $T$ is unitary, then it is an isometry by the definition. Now, for any $f \in X^*$, there exists an $x\in X$ such that $Jx=f$, $J$ being surjective. So
    \[\|T'f\| = \|T'Jx\| = \|x\| = \|Jx\|=\|f\|.\]
    Hence $T'$ is bounded below, proving that $T$ is surjective (Theorem 13.9, \cite{limaye1996functional}).
\vskip0.15cm
    Conversely, suppose that $T$ is a surjective isometry. Then, since $\|T\|=\|T'\|=1$, for any nonzero $x \in X$,
    \[\|T'Jx\| \leq \|T'\|\|Jx\| = \|Jx\| = \|x\|.\]
    Now, using surjectivity of $T$, choose a $y \in X$ such that $\displaystyle T(y)=\frac{x}{\|x\|}$. Note that $\|y\|=1$, $T$ being an isometry. This gives
    $$
    \|(T'Jx)(y)\| = \|Jx(Ty)\| = \frac{\|Jx(x)\|}{\|x\|}=\|x\|,
    $$
    proving that $ \|(T'Jx)\|\geq \|x\|$. Thus we obtain $\|T^{\prime}Jx\|=\|x\|=\|Tx\|$.
\end{proof}
Now we provide a characterisation of unitary operators, applicable to reflexive smooth strictly convex spaces.

\begin{theorem}
    Let $X$ be a reflexive smooth strictly convex Banach space. Then $T\in B(X)$ is unitary if and only if \[J^{-1}T'JT=TJ^{-1}T'J=I.\]
\end{theorem}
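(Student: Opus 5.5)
Throughout, since $X$ is reflexive and smooth, $J$ is surjective, and since $X$ is strictly convex, $J$ is injective (\cite{dragomir2003}). So $J^{-1}:X^*\to X$ is a well-defined (nonlinear) map and both compositions in the statement make sense. I would reduce everything to Theorem~\ref{unitary_charc}, which says that unitary is the same as surjective isometry. The key tool is the identity
\[
T'J(Tx)=J(x)\quad\text{for all } x\in X,
\]
valid for every isometry $T$. It comes from the uniqueness of supporting functionals in a smooth space.

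\emph{Forward direction.} Let $T$ be unitary. By Theorem~\ref{unitary_charc}, $T$ is a surjective isometry, hence invertible with $\|T\|=1$. Fix $x\neq 0$ and set $g=T'J(Tx)$, that is, $g(y)=J(Tx)(Ty)$.
\begin{itemize}
\item For the norm, $\|g\|\le\|J(Tx)\|\,\|T\|=\|Tx\|=\|x\|$.
\item At $x$ itself, $g(x)=J(Tx)(Tx)=\|Tx\|^2=\|x\|^2$.
\end{itemize}
So $g/\|x\|$ is a norm-one functional attaining its norm at $x/\|x\|$. By smoothness (uniqueness of the supporting functional), $g=J(x)$. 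Applying $J^{-1}$ gives $J^{-1}T'JT=I$.

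For the other composition, take any $y\in X$ and apply the identity to $x=T^{-1}y$. This gives $T'J(y)=J(T^{-1}y)$, hence $J^{-1}T'J(y)=T^{-1}y$, and therefore $TJ^{-1}T'J=I$.

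\emph{Converse.} Assume both identities hold.
\begin{itemize}
\item From $TJ^{-1}T'J=I$, every $y$ equals $T(J^{-1}T'Jy)$, so $T$ is surjective.
\item From $J^{-1}T'JT=I$, applying $J$ gives $T'J(Tx)=J(x)$. Evaluating at $x$ yields
\[
\|x\|^2=J(x)(x)=J(Tx)(Tx)=\|Tx\|^2,
\]
so $T$ is an isometry.
\end{itemize}
Theorem~\ref{unitary_charc} then gives that $T$ is unitary. Alternatively, one can check directly that $\|T'Jy\|=\|J(T^{-1}y)\|=\|T^{-1}y\|=\|y\|$.

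\emph{Main obstacle.} The only delicate step is establishing $T'J(Tx)=J(x)$. Because $J$ is nonlinear (it is conjugate-homogeneous), this cannot be obtained algebraically. It has to come from the norm estimate combined with uniqueness of norming functionals, which is exactly where smoothness is used. Strict convexity, and hence injectivity of $J$, is needed only so that $J^{-1}$ is meaningful in the statement.
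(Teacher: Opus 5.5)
Your proof is correct and follows the paper's argument essentially step for step. Like the paper, you reduce to Theorem~\ref{unitary_charc}, obtain $T'J(Tx)=J(x)$ for an isometry from the bound $\|T'J(Tx)\|\le\|x\|$, the evaluation $T'J(Tx)(x)=\|x\|^2$ and uniqueness of the supporting functional, and use surjectivity for $TJ^{-1}T'J=I$; the converse (isometry by evaluating at $x$, surjectivity read off from $TJ^{-1}T'J=I$) is also the same, and the omitted case $x=0$ is trivial since $J(0)=0$.
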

\begin{proof}
        In view of Theorem \ref{unitary_charc}, it is enough to show that $T$ is a surjective isometry if and only if $J^{-1}T'JT=TJ^{-1}T'J=I$.
        
         Assuming $J^{-1}T'JT=TJ^{-1}T'J=I$, for any $x \in X$, we have
    \[\|Tx\|^{2} = J(Tx)(Tx) = T' J(Tx)(x) = J(x)(x) = \|x\|^{2}.\]
This proves that $T$ is an isometry. Now, for any $y \in X$, we have
    \[
    T  J^{-1} T' J(y) = y,
    \]
    which shows that $T$ is surjective.

    Conversely, if we assume that $T$ is an isometry, then for each $x \in X,$
    \[T' J(Tx)(x) = J(Tx)(Tx) = J(x)(x) = \|x\|^2.\]
    Now, for $x\neq 0$, we have, $\displaystyle  \|T'J(Tx)(\frac{x}{\|x\|})\| = \|x\| $, which implies $\|T'J(Tx)\| \geq \|x\|$. On the other hand,
    $$\|T'J(Tx)\| \leq \|T'\|\|JTx\| \leq \|JTx\|=\|Tx\|=\|x\|.$$
   Hence $T'J(Tx)$ satisfies $T' J(Tx)(x) = \|x\|^2$ and $\|T'J(Tx)\| = \|x\|$.  Using the uniqueness of the duality map, we get
   $$
    T'J(Tx) = J(x),
   $$
    and hence, $J^{-1} T'JT = I$ (since $J$ is invertible). In addition, if $T$ is surjective, then for any $y \in X$ there exists an $x \in X$ such that $T(x) = y$. 
    Since $J^{-1} \, T' \, J \, T = I$, we get $T'(J(y)) = J(x),$ 
    Therefore,
    \[
    T J^{-1} \, T' \, J(y) = T J^{-1}(Jx) = y.
    \]
    Hence, we obtain    \(T J^{-1} \, T' \, J = I,\) as required.
\end{proof}

We conclude this section by providing an example of a normal, self-adjoint and unitary operator on $l^4(\mathbb{N})$.
\begin{example}
Consider the operator $T$ on $\ell^4(\mathbb{N})$ defined by
$$T(x)=(x_2,x_1,x_3,x_4,\dots)\, \mbox{ for }\, x=(x_1,x_2,x_3,\dots) \in \ell^4(\mathbb{N}).$$
Consider the transpose $T'$ of this operator defined on $\ell^{4/3}(\mathbb{N})$ by
$$T'(y)=(y_2,y_1,y_3,\dots)\,\mbox{ for }\, y=(y_1,y_2,y_3,\dots)\in \ell^{4/3}(\mathbb{N}).$$ 
The normalized duality map $J:l^4(\mathbb{N}) \to \ell^{4/3}(\mathbb{N})$ is given as 

$$J(x)=\frac{1}{\|x\|^2_4}
\big(|x_1|^2x_1,|x_2|^2x_2,|x_3|^2x_3,\dots\big)$$
Hence, 
\[T'J(x) =\frac{1}{\|x\|^2_4}
\big(|x_2|^2x_2,|x_1|^2x_1,|x_3|^2x_3,\dots\big) \mbox{\quad and \quad}
J(Tx)
=
\frac{1}{\|x\|^2_4}
\big(|x_2|^2x_2,|x_1|^2x_1,\dots\big).
\]
Thus, $T'J(x)=J(Tx),$ so $T$ is self-adjoint, and hence a normal operator. On calculating the norm, we obtain 
$$\|T'J(x)\|_{4/3}^{4/3}=\frac{1}{\|x\|^{8/3}_4}
\left(|x_2|^4+|x_1|^4+\cdots\right)=\|x\|^{4/3}_4.$$
Since $T$ is an isometry on $\ell^4(\mathbb{N})$, we get
$$
\|T'J(x)\|_{4/3}=\|Tx\|_4 =\|x\|_4 \quad \text{for all } x\in \ell^4(\mathbb{N}),
$$ showing that $T$ is a unitary operator.

\end{example}

\section{Spectral properties of self-adjoint and normal operators}
In this section, we establish some results on the spectrum of self-adjoint operators and normal operators on smooth Banach spaces. In the Hilbert space setting, it is well known that for a self-adjoint operator, the eignevalues are real, and the eigenvectors corresponding to distinct eigenvalues are orthogonal. In the setting of smooth Banach spaces, it was shown (Theorem 6.3(i)) in \cite{banachSA} that the eigenvalues of self-adjoint operators are real. Here, we obtain an analogous result equivalent to orthogonality of eigenvectors in the setting of smooth Banach spaces.
\begin{proposition}\label{eigenvector_perp}
    Let $x_1$ and $x_2$ be eigenvectors corresponding to two distinct eigenvalues of a self-adjoint operator $T$ on smooth Banach space $X$. Then $x_2 \in {}^\perp J(x_1)$ and $x_1 \in {}^\perp J(x_2)$ .
\end{proposition}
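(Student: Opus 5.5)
The plan is to combine the self-adjointness identity $T'\circ J = J\circ T$ with the fact, from Theorem 6.3(i) of \cite{banachSA}, that the eigenvalues of a self-adjoint operator are real. Let $Tx_1=\lambda_1x_1$ and $Tx_2=\lambda_2x_2$ with $\lambda_1\neq\lambda_2$, where $\lambda_1,\lambda_2\in\mathbb{R}$. The goal is to compute the scalar $J(x_1)(Tx_2)$ in two different ways.

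First, by linearity of the functional $J(x_1)$,
\[J(x_1)(Tx_2)=J(x_1)(\lambda_2x_2)=\lambda_2\,J(x_1)(x_2).\]
Second, by the definition of the transpose and the self-adjointness of $T$,
\[J(x_1)(Tx_2)=(T'J(x_1))(x_2)=J(Tx_1)(x_2)=J(\lambda_1x_1)(x_2).\]

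The step that needs care is evaluating $J(\lambda_1x_1)$, since $J$ is not linear. For a real scalar $t$, the functional $tJ(x)$ has norm $|t|\|x\|$ and takes the value $t^2\|x\|^2$ at $tx$. By uniqueness of the support functional in a smooth space, this gives $J(tx)=tJ(x)$. For $t>0$ this is simply the homogeneity of the normalized duality map; for $t<0$ one checks that $-J(x)$ is the norming functional of $-x$; and for $t=0$ both sides vanish. This is exactly where the reality of $\lambda_1$ is essential: for complex scalars one would only get $J(\lambda x)=\bar\lambda J(x)$.

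With this, $J(\lambda_1x_1)(x_2)=\lambda_1J(x_1)(x_2)$. Comparing the two computations yields $(\lambda_1-\lambda_2)J(x_1)(x_2)=0$. Since $\lambda_1\neq\lambda_2$, we get $J(x_1)(x_2)=0$, that is, $x_2\in{}^\perp J(x_1)$. Exchanging the roles of $x_1$ and $x_2$ gives $x_1\in{}^\perp J(x_2)$ in the same way.
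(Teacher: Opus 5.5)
Your proof is correct and follows essentially the same route as the paper. Both compute $J(x_1)(Tx_2)$ in two ways via $T'J=JT$ and the linearity of $J(x_1)$, use the reality of the eigenvalues to write $J(\lambda_1x_1)=\lambda_1J(x_1)$, and conclude that $(\lambda_1-\lambda_2)J(x_1)(x_2)=0$. Your uniqueness-of-support-functional argument for $J(tx)=tJ(x)$ with real $t$ fills in a step the paper uses without comment. You also state correctly that this computation gives $x_2\in{}^\perp J(x_1)$; the paper's write-up labels that conclusion as $x_1\in{}^\perp J(x_2)$, which is a harmless slip by symmetry.
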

\begin{proof}
    Let $Tx_1=k_1x_1$ and $Tx_2=k_2x_2$, where $k_1\neq k_2$. Since $k_1$ and $k_2$ are real numbers, we have
   \[k_1J(x_1)(x_2) = J(k_1x_1)(x_2)=J(Tx_1)(x_2)= J(x_1)(Tx_2)=k_2J(x_1)(x_2).\]
 Hence, if $k_1 \neq k_2$, then $x_1 \in {}^\perp J(x_2)$. Similarly, we get $x_2 \in {}^\perp J(x_1)$.
\end{proof}
 It is well known that the eigenspectrum of a self-adjoint operator on a separable Hilbert space is countable. In Proposition \ref{lemma-martin}, we obtain a similar result in the setting of smooth Banach spaces. In order to establish this, we recall a known result from \cite{martin2024}.
\begin{proposition}[Fact 1.1 \cite{martin2024}]\label{birkoff}
    For any two elements $x,y$ in a Banach space $X$, $\|x+\lambda y\|\geq \|x\|$ for every $\lambda \in \C$ if and only if there exists a $\phi \in S_{X^*}$ such that $\phi(x)=\|x\|$ and $\phi(y)=0$.
\end{proposition}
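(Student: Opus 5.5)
The plan is to prove both implications through the Hahn–Banach theorem, treating the scalar $\lambda$ as the coefficient of $y$ in one-dimensional extensions. The target is the statement of Proposition \ref{birkoff}: $\|x+\lambda y\|\ge \|x\|$ for all $\lambda\in\C$ if and only if some $\phi\in S_{X^*}$ satisfies $\phi(x)=\|x\|$ and $\phi(y)=0$.

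For the easy direction ($\Leftarrow$), suppose such a $\phi$ exists. For every $\lambda\in\C$ we get
\[
\|x+\lambda y\|\ \ge\ |\phi(x+\lambda y)| = |\phi(x)+\lambda\phi(y)| = \|x\|,
\]
since $\|\phi\|=1$. This finishes that direction.

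For the direction ($\Rightarrow$), assume $\|x+\lambda y\|\ge\|x\|$ for all $\lambda\in\C$.
\begin{itemize}
\item If $x=0$, the claim is immediate: take any norm-one functional vanishing at $y$, or any norm-one functional at all if $y=0$. Such a functional exists by Hahn--Banach whenever $\dim X\ge 2$; the degenerate case $X=\C$ with $y\neq 0$ should be noted separately.
\item If $x\neq 0$ and $y$ is a multiple of $x$, say $y=cx$, then the hypothesis with $\lambda=-1/c$ forces $c=0$. So $y=0$, and any support functional at $x$ works.
\item In the main case, $x$ and $y$ are linearly independent. Let $M=\operatorname{span}\{x,y\}$ and define the linear functional $\phi_0$ on $M$ by $\phi_0(\alpha x+\beta y)=\alpha\|x\|$.
\end{itemize}

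The key step is bounding $\phi_0$ on $M$. For $\alpha\neq 0$, the hypothesis with $\lambda=\beta/\alpha$ gives
\[
|\phi_0(\alpha x+\beta y)| = |\alpha|\,\|x\| \le |\alpha|\,\|x+(\beta/\alpha)y\| = \|\alpha x+\beta y\|.
\]
For $\alpha=0$, $\phi_0$ vanishes, so the bound holds trivially. Hence $\|\phi_0\|\le 1$, and equality holds because $\phi_0(x)=\|x\|$. By the Hahn--Banach theorem, $\phi_0$ extends to some $\phi\in X^*$ with $\|\phi\|=1$, $\phi(x)=\|x\|$, and $\phi(y)=0$.

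The only real obstacle is this norm estimate on the two-dimensional subspace. It is exactly where the hypothesis over all complex $\lambda$ is used: real $\lambda$ alone would give only a real-linear functional, which would then need the standard complexification.
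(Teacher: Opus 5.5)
The paper gives no proof of this statement. It is imported verbatim as Fact~1.1 of \cite{martin2024}, which is James's characterisation of Birkhoff--James orthogonality. Your argument is the standard proof and is correct: you define $\phi_0(\alpha x+\beta y)=\alpha\|x\|$ on $\operatorname{span}\{x,y\}$, bound it using the hypothesis at $\lambda=\beta/\alpha$, and extend by the complex Hahn--Banach theorem.

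Two small corrections.

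First, the case you set aside ($\dim X=1$, $x=0$, $y\neq 0$, and trivially $X=\{0\}$) is a genuine exception to the statement as printed, not just something to note separately. There the left-hand condition holds, but no $\phi\in S_{X^*}$ vanishes at $y$. This is harmless for the paper, which only uses the easy direction ($\Leftarrow$), with unit vectors, in Proposition~\ref{lemma-martin}.

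Second, your closing remark is wrong. With only real $\lambda$ the conclusion actually fails in complex spaces. Take $X=\mathbb{C}$, $x=1$, $y=i$: then $|1+ti|\geq 1$ for all real $t$, yet no norm-one functional on $\mathbb{C}$ vanishes at $i$. Complexification cannot repair this, because complexifying a real-linear $\psi$ with $\psi(y)=0$ gives $\phi(y)=-i\psi(iy)$, which need not be zero. So the complex scalars in the hypothesis are essential.
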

Now we prove the countability of the eigenspectrum of self-adjoint operators on smooth Banach spaces.
\begin{proposition}\label{lemma-martin}
    Let $X$ be a separable smooth Banach space, and $T \in B(X)$ be self-adjoint. Then $\sigma_{eig}(T)$ is countable.
\end{proposition}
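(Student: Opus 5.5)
Fix, for each $\lambda \in \sigma_{eig}(T)$, a unit eigenvector $x_\lambda \in S_X$ with $Tx_\lambda = \lambda x_\lambda$. The plan is to show that distinct eigenvectors chosen this way are uniformly separated, say $\|x_\lambda - x_\beta\| \geq 1$ for $\lambda \neq \beta$. Then the open balls of radius $\tfrac12$ around the $x_\lambda$ are pairwise disjoint. In a separable space each such ball contains a point of a fixed countable dense set, and distinct balls contain distinct points, so $\lambda \mapsto x_\lambda$ injects $\sigma_{eig}(T)$ into a countable set. This is exactly the argument used for orthonormal families in separable Hilbert spaces.

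To get the separation, take $\lambda \neq \beta$ in $\sigma_{eig}(T)$. By Proposition \ref{eigenvector_perp}, $J(x_\lambda)(x_\beta) = 0$. Since $x_\lambda \in S_X$, the functional $\phi = J(x_\lambda)$ lies in $S_{X^*}$ and satisfies $\phi(x_\lambda) = 1 = \|x_\lambda\|$ and $\phi(x_\beta) = 0$. Proposition \ref{birkoff} (the easy direction) then gives $\|x_\lambda + \mu x_\beta\| \geq \|x_\lambda\| = 1$ for all $\mu \in \C$. Directly, $\|x_\lambda + \mu x_\beta\| \geq |\phi(x_\lambda + \mu x_\beta)| = 1$. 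Taking $\mu = -1$ yields $\|x_\lambda - x_\beta\| \geq 1$.

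The only point needing care is that Proposition \ref{eigenvector_perp} is applied with the correct orientation. We need $x_\beta \in {}^\perp J(x_\lambda)$, and the proposition supplies both orientations, so this is fine. The reality of the eigenvalues, already used inside Proposition \ref{eigenvector_perp}, handles the conjugate-homogeneity of $J$. Note also that the separation requires only one orthogonality relation per pair, and it holds in both directions in any case. With the separation in hand, the countability argument of the first paragraph is routine.
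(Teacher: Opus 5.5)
Your proof is correct and follows the paper's argument essentially step for step: you fix one unit eigenvector per eigenvalue, combine the annihilation relation from Proposition~\ref{eigenvector_perp} with the norming functional $J(x_\lambda)$ to get $\|x_\lambda - x_\beta\|\geq 1$, and conclude with pairwise disjoint balls of radius $\tfrac12$ against separability. Your direct estimate $\|x_\lambda - x_\beta\| \geq |J(x_\lambda)(x_\lambda - x_\beta)| = 1$ is a slightly more self-contained way to obtain the separation than citing Proposition~\ref{birkoff}, but the idea is the same.
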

\begin{proof}
    Let $\Omega$ be the set constructed by taking one unit eigenvector corresponding to each distinct eigenvalue of $T$. Then for each $x\neq y$ from $\Omega$, by Proposition \ref{eigenvector_perp}, we have, $x\in {}^{\perp}J(y)$. This means $J(x)(y)=0$ and $\|J(y)\|=1=\|y\|$.  Now applying Proposition \ref{birkoff} with $\lambda=-1$, we get $\|x-y\|\geq 1$. Thus the elements of $\Omega$ are $1-$distance apart from each other.

    Now suppose that $\Omega$ is uncountable. Then, consider the collection 
    $$A=\{B(x, \frac{1}{2}):\, x\in \Omega\},$$ 
    where $B(a,r)$ represent an open ball centred at $a$ of radius $r$. Then $A$ is an uncountable family of pairwise disjoint open balls in $X$. Consequently, any dense subset of the space must intersect each element of $A$, forcing every dense subset to be uncountable, which contradicts the separability of the space. Therefore, $\Omega$ must be countable.
\end{proof}
We can establish the equality of the approximate spectrum with the spectrum for self-adjoint operators, in certain cases.
\begin{theorem}
    Let $X$ be a reflexive smooth strictly convex Banach space, and let $T \in B(X)$ be a self-adjoint operator. Then $\sigma(T) = \sigma_{app}(T)$
\end{theorem}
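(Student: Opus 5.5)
The plan is to combine the decomposition of the spectrum used in the proof of Lemma~\ref{SA_non_inv_bound} with the surjectivity and injectivity of the duality map. By Proposition 13.8 of \cite{limaye1996functional},
\[\sigma(T)=\sigma_{app}(T)\cup\sigma_{eig}(T').\]
Since $\sigma_{app}(T)\subseteq\sigma(T)$ always holds, it suffices to show that $\sigma_{eig}(T')\subseteq\sigma_{app}(T)$. We will in fact show that $\sigma_{eig}(T')\subseteq\sigma_{eig}(T)$.

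Let $\lambda\in\sigma_{eig}(T')$, so $T'f=\lambda f$ for some nonzero $f\in X^*$. Since $X$ is reflexive and smooth, $J$ is surjective (Theorem 2, \cite{dragomir2003}), so $f=J(x)$ for some $x\neq 0$. Self-adjointness then gives
\[J(Tx)=T'J(x)=\lambda J(x).\]
Next we use the conjugate homogeneity of the duality map, $J(\alpha x)=\bar\alpha J(x)$ for $\alpha\in\mathbb{C}$. To verify it, note that $\bar\alpha J(x)$ has norm $|\alpha|\|x\|$ and takes the value $|\alpha|^2\|x\|^2$ at $\alpha x$. Uniqueness of the supporting functional, which holds by smoothness, then forces $\bar\alpha J(x)=J(\alpha x)$. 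Consequently
\[J(Tx)=J(\bar\lambda x).\]

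Since $X$ is strictly convex, $J$ is injective (\cite{dragomir2003}), so $Tx=\bar\lambda x$ and $\bar\lambda$ is an eigenvalue of $T$. By Theorem 6.3(i) of \cite{banachSA}, eigenvalues of self-adjoint operators are real. Hence $\bar\lambda=\lambda$, and $\lambda\in\sigma_{eig}(T)\subseteq\sigma_{app}(T)$, which completes the argument.

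The only delicate point is the conjugation: for complex $\lambda$, the relation $J(Tx)=\lambda J(x)$ does not directly give $Tx=\lambda x$. This is handled by the conjugate homogeneity of $J$ together with the reality of the eigenvalues. All three hypotheses on $X$ are used: smoothness to define $J$, reflexivity for the surjectivity of $J$, and strict convexity for its injectivity.
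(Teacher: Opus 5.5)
Your proposal is correct and follows essentially the same argument as the paper. You reduce via $\sigma(T)=\sigma_{app}(T)\cup\sigma_{eig}(T')$ to showing $\sigma_{eig}(T')\subseteq\sigma_{eig}(T)$, write the eigenvector of $T'$ as $J(x)$ by surjectivity, use $J(\bar\lambda x)=\lambda J(x)=T'Jx=J(Tx)$ with injectivity of $J$ to get $Tx=\bar\lambda x$, and finish with the reality of eigenvalues; your explicit check of the conjugate homogeneity of $J$ via smoothness supplies a step the paper uses without comment.
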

\begin{proof}
    We have as mentioned earlier $\sigma(T) = \sigma_{app}(T) \cup \sigma_{eig}(T')$. We prove the result by showing $\sigma_{eig}(T')\subseteq \sigma_{eig}(T)$.  Consider $\lambda \in \sigma_{eig}(T')$. Then there exists a nonzero $f \in X^*$ such that $\|(T'-\lambda I)f\|=0$. Since $J$ is onto, we get a nonzero $x\in X$ such that $\|(T'-\lambda I)Jx\|=0$, giving $T'Jx = \lambda Jx$. Thus,
    $$
    J(\overline{\lambda}x)=\lambda J(x) = T'Jx = J(Tx).
    $$
    The injectivity of $J$ implies $Tx = \overline{\lambda }x$. Since the eigenvalues of $T$ are real, this gives $\lambda \in \sigma_{eig}(T)$. Thus $\sigma(T) = \sigma_{app}(T)$.
\end{proof}

It is a classical result in the Hilbert space context that the approximate spectrum coincides with the spectrum for a normal operator also. The following result provides a partial extension of this property to reflexive smooth Banach spaces.

\begin{theorem}\label{spectrum=approx}
Let $X$ be a reflexive smooth Banach space and $T\in {B}(X)$. 
If $\lambda \in \sigma(T)$ is such that $T-\lambda I$ is normal, then $\lambda \in \sigma_{app}(T)$.
\end{theorem}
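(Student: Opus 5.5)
We argue exactly as in Lemma~\ref{SA_non_inv_bound}, with $S:=T-\lambda I$ in place of the self-adjoint operator and normality of $S$ replacing the identity $T'J=JT$. The plan has three steps.

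\textbf{Step 1: split the spectrum.} Since $\lambda\in\sigma(T)$, the operator $S$ is not invertible. Hence
\[
0\in\sigma(S)=\sigma_{app}(S)\cup\sigma_{eig}(S')
\]
by Proposition 13.8 of \cite{limaye1996functional}. If $0\in\sigma_{app}(S)$, then $S$ is not bounded below. This means $\lambda\in\sigma_{app}(T)$, and we are done.

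\textbf{Step 2: the case $0\in\sigma_{eig}(S')$.} Here there is a nonzero $f\in X^*$ with $S'f=0$. Because $X$ is reflexive and smooth, $J$ is surjective (Theorem 2, \cite{dragomir2003}). So $f=J(x)$ for some $x\in X$, and $x\neq 0$ since $\|x\|=\|Jx\|=\|f\|\neq 0$.

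\textbf{Step 3: use normality.} By Definition~\ref{normal-defn} applied to $S$,
\[
\|Sx\|=\|S'Jx\|=\|S'f\|=0 .
\]
Thus $x$ is an eigenvector of $T$ for $\lambda$, so $\lambda\in\sigma_{eig}(T)\subseteq\sigma_{app}(T)$. This completes the argument.

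The only point needing care is that the transpose of $T-\lambda I$ is $T'-\lambda I'$. This is immediate from the definition of the transpose, and the hypothesis is already phrased directly in terms of $S'$, so we only need $S'$ itself. The main obstacle, which reflexivity overcomes, is converting an arbitrary functional $f$ in $\ker S'$ into one of the form $J(x)$ so that normality can be applied. Strict convexity is not needed: we use only that $J$ is onto, not that it is injective.
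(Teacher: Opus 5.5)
Your proposal is correct and matches the paper's proof: it splits $\sigma(T)=\sigma_{app}(T)\cup\sigma_{eig}(T')$, writes the kernel element $f$ of $(T-\lambda I)'$ as $Jx$ using surjectivity of $J$, and applies normality of $T-\lambda I$ to get $\|(T-\lambda I)x\|=0$. The only cosmetic difference is that you shift to $S=T-\lambda I$ and $0\in\sigma(S)$, and you also check explicitly that $x\neq 0$.
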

\begin{proof}
    Consider $\lambda \in \sigma(T)= \sigma_{app}(T)\cup \sigma_{eig}(T')$. If $\lambda \in \sigma_{eig}(T')$, then we obtain a nonzero $f \in X^*$ such that $\|(T'-\lambda I)f\|=0$. Now, as in the earlier case, since $J$ is onto, there is an $x\neq 0$ in $X$ such that
    \[\|(T'-\lambda I)f\|=\|(T-\lambda I)'Jx\|=\|(T-\lambda I)x\|,\]
    since $T-\lambda I$ is normal. This proves that $\lambda \in \sigma_{eig}(T)\subseteq \sigma_{app}(T)$.
\end{proof}

Note that in the Hilbert space setting, if $T$ is normal, then $T-\lambda I$ is normal for any scalar $\lambda$. As a direct consequence of the above result, we obtain the equality of the minimum modulus and Crawford number for a non-invertible normal operator, generalizing Proposition 3.15 of \cite{crawford2024}.

\begin{corollary}\label{non_inv_bound}
    Let $T$ be a normal operator on a reflexive smooth Banach space $X$. Then $T$ is invertible if and only if $T$ is bounded below. Hence, if $T$ is a non-invertible normal operator, then  $c(T)=\mu(T)=0.$
\end{corollary}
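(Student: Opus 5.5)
The plan is to apply Theorem \ref{spectrum=approx} with $\lambda=0$. Here $T-0I=T$ is normal by hypothesis, so no normality of shifted operators is needed. That is what makes the corollary work for every normal $T$, and not only for Hilbert-space normal operators, where all shifts are automatically normal.

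\emph{Equivalence.} If $T$ is invertible, then $\|x\|=\|T^{-1}Tx\|\le\|T^{-1}\|\,\|Tx\|$, so $T$ is bounded below with $\mu(T)\ge\|T^{-1}\|^{-1}>0$. For the converse, suppose $T$ is bounded below but not invertible. Then $0\in\sigma(T)$, and since $T-0I$ is normal, Theorem \ref{spectrum=approx} gives $0\in\sigma_{app}(T)$, i.e.\ $T$ is not bounded below, a contradiction. The same contradiction can be reached directly, mirroring the proof of Lemma \ref{SA_non_inv_bound}. Write $0\in\sigma_{app}(T)\cup\sigma_{eig}(T')$. If $T'f=0$ with $f\neq0$, use the surjectivity of $J$ (reflexivity) to write $f=Jx$ with $x\ne0$. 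Normality then gives $\|Tx\|=\|T'Jx\|=0$, contradicting bounded below.

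\emph{Second assertion.} If $T$ is normal and non-invertible, the equivalence says $T$ is not bounded below. Hence $\mu(T)=\inf_{x\in S_X}\|Tx\|=0$. For every $x\in S_X$ we have $|J(x)(Tx)|\le\|J(x)\|\,\|Tx\|=\|Tx\|$, so $0\le c(T)\le\mu(T)=0$. This gives $c(T)=\mu(T)=0$.

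I expect no real obstacle. The only point needing care is that Theorem \ref{spectrum=approx} is used exactly at $\lambda=0$, where the normality hypothesis is precisely the one assumed. The decomposition $\sigma(T)=\sigma_{app}(T)\cup\sigma_{eig}(T')$ and the surjectivity of $J$ are already established earlier in the paper.
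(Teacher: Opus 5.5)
Your proposal is correct and follows essentially the same route as the paper: apply Theorem~\ref{spectrum=approx} at $\lambda=0$ (where normality of $T-0I=T$ is exactly the hypothesis) to get the equivalence, then use $0\le c(T)\le\mu(T)=0$ for the second assertion. Your inlined direct argument via $\sigma(T)=\sigma_{app}(T)\cup\sigma_{eig}(T')$ and the surjectivity of $J$ is just the proof of that theorem specialised to $\lambda=0$.
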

\begin{proof}
    The first part follows from Theorem \ref{spectrum=approx} by taking $\lambda=0$, and the second from $0\leq c(T)\leq \mu(T)=0$, when $T$ is not invertible. 
\end{proof}

\section*{Conclusion and open problems}

In this article, we have considered three classes of operators on smooth Banach spaces - namely self-adjoint, normal, and unitary operators - and investigated several of their properties, including attainment properties and spectral behaviour. In particular, we obtained a number of results that are analogous to their counterparts in the Hilbert space setting. At the same time, the extent to which other well-known properties from the Hilbert space theory remain valid in the setting of smooth Banach spaces is not yet fully clear and needs further investigation. This leads to several natural questions. 
\begin{enumerate}
    \item Under what conditions does a self-adjoint operator become Hermitian and conversely?
    \item While strongly normal operators are known to be self-adjoint and positive, is the converse true, and can one obtain a characterisation of strongly normal operators? 
    \item If $T$ is a self-adjoint operator on a smooth Banach space, are the operators $
\|T\|^2 I - T^2 \quad \text{and} \quad T^2 - \mu(T)^2 I$ strongly normal, or can counterexamples be found? 
\item What can be said about the structure of the numerical range of special operators such as normal, unitary, and self-adjoint operators, and under what conditions is it convex? 
\item Is there any relationship between the operator identity $T J^{-1} T' J = J^{-1} T' J T$ and the norm equality $\|Tx\| = \|T'Jx\| $ for all $ x \in X$, which are equivalent in the Hilbert space setting? 
\item While the spectral radius, numerical radius, and operator norm coincide for normal operators on Hilbert spaces, does an analogous result hold in the present framework? 
\item In separable Hilbert spaces, the eigenspectrum of a normal operator is countable and that of a unitary operator lies on the unit circle; what can be said about these properties in the smooth Banach space setting?
\end{enumerate}

\section*{Acknowledgments}
 Mr. Mohammed Shameem would like to thank the University Grants Commission of India for funding his research work (Fellowship No. 231610086159). The authors thank Ms. Sreelakshmi M. P. for early discussions on duality mappings and smooth Banach spaces.

\section*{Statements and Declarations}
\subsection*{Conflict of Interest}
 The corresponding author declares that there is no conflict of interest on behalf of all authors.

\end{document}